\newtheorem{proposition}{Proposition}[section]
\newtheorem{lemma}[proposition]{Lemma}
\newtheorem{corollary}[proposition]{Corollary}
\newtheorem{theorem}[proposition]{Theorem}
\theoremstyle{definition}
\newtheorem{definition}[proposition]{Definition}
\newtheorem{example}[proposition]{Example}
\theoremstyle{remark}
\newtheorem{remark}[proposition]{Remark}
\newtheorem{notation}[proposition]{Notation}
\newcommand{\thlabel}[1]{\label{th:#1}}
\newcommand{\thref}[1]{Theorem~\ref{th:#1}}
\newcommand{\selabel}[1]{\label{se:#1}}
\newcommand{\seref}[1]{Section~\ref{se:#1}}
\newcommand{\lelabel}[1]{\label{le:#1}}
\newcommand{\leref}[1]{Lemma~\ref{le:#1}}
\newcommand{\prlabel}[1]{\label{pr:#1}}
\newcommand{\prref}[1]{Proposition~\ref{pr:#1}}
\newcommand{\colabel}[1]{\label{co:#1}}
\newcommand{\coref}[1]{Corollary~\ref{co:#1}}
\newcommand{\relabel}[1]{\label{re:#1}}
\newcommand{\delabel}[1]{\label{de:#1}}
\newcommand{\eqlabel}[1]{\label{eq:#1}}
\newcommand{\equref}[1]{(\ref{eq:#1})}
\newcommand{\nolabel}[1]{\label{no:#1}}
\newcommand{\noref}[1]{Notation~\ref{no:#1}}
\def\a{\alpha}
\def\Ad{\text{Ad}}
\def\b{\beta}
\def\d{\delta}
\def\D{\Delta}
\def\ep{\varepsilon}
\def\g{\gamma}
\def\l{\lambda}
\def\N{\mathbb{N}}
\def\op{\oplus}
\def\ot{\otimes}
\def\ra{\rightarrow}
\def\ti{\times}
\def\<{\leq}
\def\>{\geq}
\date{}
\begin{document}
\title{Generalized Hopf-Ore extensions}
\thanks{This research is supported by NNSF of China (Grant No.11571298)}
\author{Lan You}
\address{College of Mathematical Science, Yangzhou University, Yangzhou 225002, China;
School of Mathematics and Physics, Yancheng Institute
of Technology, Yancheng 224051, China}
\email{youl@ycit.cn}
\author{Zhen Wang}
\address{School of Mathematics and Physics, Yancheng Institute
of Technology, Yancheng 224051, China}
\email{wangz@ycit.cn}
\author{Hui-Xiang Chen}
\address{College of Mathematical Science, Yangzhou University, Yangzhou 225002, China}
\email{hxchen@yzu.edu.cn}
\subjclass[2010]{16T05,16S36,16S30}
\keywords{Hopf algebra, Ore extension, enveloping algebra, half quantum group}

\begin{abstract}
We derive necessary and sufficient conditions for an Ore extension of a Hopf algebra
to have a Hopf algebra structure of a certain type. This construction generalizes
the notion of Hopf-Ore extension, called a generalized Hopf-Ore extension.
We describe the generalized Hopf-Ore extensions of the enveloping algebras of Lie algebras.
For some Lie algebras $\mathfrak g$, the generalized Hopf-Ore extensions of $U(\mathfrak g)$
are classified.
\end{abstract}
\maketitle

\section*{Introduction}
For some special algebras, to investigate Hopf algebra
structures over them is an effective method
for studying and classifying Hopf algebras.
The algebraic structures of Ore extensions have been
studied extensively in the past several years. In particular, one can consider Hopf algebra structures over them.
For example, many Hopf algebras have been constructed and classified by means of
Ore extensions \cite{BDG1,BDG2,Pa,WYC}.
In \cite{Pa}, a class of Hopf algebra structures over Ore extension were defined and studied.
Let $A$ be a Hopf algebra and $A[z;\tau,\d]$ an Ore extension of $A$.
Under certain conditions, $A[z;\tau,\d]$ becomes a Hopf algebra by setting $\D z=z\ot r_1+r_2\ot z$
for some group-like elements $r_1,r_2\in A$, which is called Hopf-Ore
extension of $A$ \cite[Definition 1.0]{Pa}. Recently, general Hopf algebra structures over Ore
extension were discussed in \cite{BOZZ}.

In this paper, we generalize the notion in \cite{Pa} by setting $\D z=z\ot r_1+r_2\ot z+x\ot y$
for some $r_1,r_2,x,y\in A$ such that $A[z;\tau,\d]$ is a Hopf algebra.
We call $A[z;\tau,\d]$ with this type of Hopf algebra structure a generalized Hopf-Ore extension of $A$.
In \cite{Zh}, two kinds of connected Hopf algebras $A(\l_1,\l_2,\a)$ and $B(\l)$
were constructed and were used to classify connected Hopf algebras of GK-dimension three
over an algebraically closed field of characteristic zero. These Hopf algebras
can be regarded as the generalized Hopf-Ore extensions of the enveloping algebras of $2$-dimensional
Lie algebras. Moreover, the half quantum group
$U_q^{\>0}(\mathfrak{sl}(3))$ (see \cite{Ci}) can be constructed
by the generalized Hopf-Ore extension.

This paper is organized as follows.
In \seref{1}, we give a sufficient and
necessary condition for $A[z;\tau,\d]$ to have
this type of Hopf algebra structures and study the properties of
the generalized Hopf-Ore extension $A[z;\tau,\d]$.
For two such Hopf algebra structures over
Ore extensions, we give a sufficient and necessary condition for them to be isomorphic.
In \seref{2}, we describe the generalized Hopf-Ore extensions of the enveloping algebras $U(\mathfrak{g})$
of Lie algebras $\mathfrak g$. The generalized Hopf-Ore extensions of $U(\mathfrak{g})$
are classified when $\mathfrak{g}$ is a $1$-dimensional Lie algebra over an arbitrary field,
a $2$-dimensional Lie algebra over an arbitrary field, and
an $n$-dimensional abelian Lie algebra over a field of characteristic zero, respectively.

Throughout, we work over a field $k$. Let $k^{\ti}$ denote the set of all non-zero elements of $k$,
which is a multiplicative group.
We refer to \cite{Mo} for basic notions concerning Hopf algebras.

\section{Hopf algebra structures on Ore extensions}\selabel{1}
Let $A$ be a $k$-algebra. Let $\tau $ be an algebra endomorphism of
$A$ and $\d$ a $\tau$-derivation of $A$.
The Ore extension $A[z;\tau,\d]$ of the algebra $A$ is an algebra
generated by the variable $z$ and the algebra
$A$ with the relation
\begin{equation}\eqlabel{d1}
    za=\tau (a)z+\d(a),\ a\in A.
\end{equation}
If $\{a_i\mid i\in I\}$ is a $k$-basis of $A$,
 $\{a_iz^j\mid i\in I,j\in \N \}$ is a $k$-basis of $A[z;\tau,\d]$ (see \cite[2.1]{Mc}).

Furthermore, assume that $A$ has a Hopf algebra structure. Then we can define a Hopf algebra structure
on $A[y; \tau, \d]$, which generalizes the Hopf-Ore extension defined in \cite{Pa}.

\begin{definition}\delabel{1.1}
Let $A$ be a Hopf algebra and $H=A[z; \tau, \d]$ an Ore extension of $A$.
If there is a Hopf algebra structure on $H$ such that $A$ is a Hopf subalgebra of $H$ and
$$\D(z)=z\ot r_1+x\ot y +r_2\ot z$$
for some $r_1, r_2, x, y\in A$, then $H$ is called a generalized Hopf-Ore extension of $A$.
In this case, we also say that $H$ has a Hopf algebra structure determined by $(r_1, r_2, x, y)$.
\end{definition}

Note that $r_1, r_2$ are nonzero. When $x\ot y=0$, one recover the definition of usual Hopf-Ore extension in \cite{Pa}.

Let $A$ be a Hopf algebra. Recall that an element $g\in A$ is a {\it group-like element}
if $\D g=g\ot g$ and $\ep(g)=1$.  Let $G(A)$ denote the group of group-like elements in $A$.
For $g, h\in G(A)$, an element $a\in A$ is a {\it $(g,h)$-primitive element} if $\D(a)=a\ot g+h\ot a$.
Let $P_{g,h}(A)$ denote the set of all $(g,h)$-primitive elements of $A$.
When $g=h=1$, a $(1,1)$-primitive element is simply called a {\it primitive element} of $A$,
and $P_{1,1}(A)$ is simply written as $P(A)$.

\begin{proposition}\prlabel{1.2}
Let $A$ be a Hopf algebra, $H=A[z; \tau, \d]$ and $r_1, r_2, x, y\in A$. If
$H$ has a Hopf algebra structure determined by $(r_1, r_2, x, y)$,
then $r_1, r_2\in G(A)$, and one of the followings is satisfied:

{\rm(a)} $x =0$ or $y=0$;\\
{\rm(b)} $x=\a r_2$ and $y=\b r_1$ for some $\a,\b\in k^{\ti}$;\\
{\rm(c)} $x\in P_{r_3,r_2}(A)$ and $y\in P_{r_1,r_3}(A)$ for some $r_3\in G(A)$.
\end{proposition}
\begin{proof}
Note that $H$ is a free left $A$-module under left multiplication with the basis $\{z^i\mid i\>0\}$.
Consequently, $H\ot H\ot H$ is a free left $A\ot A\ot A$-module with the basis $\{z^i\ot z^j\ot z^s\mid i,j,s\>0\}$.
By comparing
\begin{equation*}
    (\D\ot{\rm id})\D(z)=z\ot r_1\ot r_1+x\ot y\ot r_1+r_2\ot z\ot r_1+\D x\ot y +\D r_2\ot z
\end{equation*}
with
\begin{equation*}
    ({\rm id}\ot\D)\D(z)=z\ot \D r_1+x\ot \D y+r_2\ot z\ot r_1+r_2\ot x\ot y+r_2\ot r_2\ot z,
\end{equation*}
one gets
\begin{eqnarray}
 & \D(r_1)=r_1\ot r_1, \quad \D(r_2)=r_2\ot r_2, \eqlabel{1}\\
 & x\ot y\ot r_1+\D(x)\ot y =x\ot\D(y)+r_2\ot x\ot y.\eqlabel{2}
\end{eqnarray}
Since $r_1\neq 0$ and $r_2\neq 0$, it follows from \equref{1} that $r_1$ and $r_2$ are both group-like elements.
We have $x=0$ or $y=0$ or $x,y\neq 0$. When $x,y\neq 0$ and $x=\a r_2$ for some $\a\in k^{\ti}$,
the equation \equref{2} becomes $\a r_2\ot y\ot r_1+\a r_2\ot r_2\ot y=\a r_2\ot\D(y)+r_2\ot\a r_2\ot y$.
Applying $\ep\ot\ep\ot{\rm id}$ to both sides, one gets that $y=\ep(y)r_1$. Let $\b=\ep(y)$.
Then $x=\a r_2$ and $y=\b r_1$ for some $\a,\b\in k^{\ti}$ in this case.
When $x,y\neq 0$ and $x\neq \a r_2$ for any $\a\in k^{\ti}$, $\{r_2,x\}$ are linearly independent.
Hence we have $\D(x)=r_2\ot x+u$ and $\D(y)=y\ot r_1+w$ for some $u, w\in A\ot A$ with $u\neq 0$.
Then it follows from \equref{2} that $u\ot y=x\ot w$, which implies
$u=x\ot u'$ and $w=w'\ot y$ for some non-zero elements $u',w'\in A$.
Hence $x\ot u'\ot y=x\ot w'\ot y$, and so $u'=w'$.
Thus, by $\D(x)=r_2\ot x+x\ot u'$ and $(\D\ot{\rm id})\D(x)=({\rm id}\ot\D)\D(x)$, we have that $\D u'=u'\ot u'$.
Hence $u'$ is a group-like element. This completes the proof by letting $r_3=u'$.
\end{proof}

If $x=0$ or $y=0$, then $x\ot y=0$, and so all Hopf algebra structures on $H$ determined by $(r_1,r_2,x,y)$
are the same. Hence we may assume that $x=y=0$ in this case.
If $x=\a r_2$ and $y=\b r_1$ for some $\a,\b\in k^{\ti}$, let $z'=z+\a\b r_2$, $\d'(a)=\d(a)+\a\b(r_2a-\tau(a)r_2)$ for $a\in A$.
Then $\d'$ is a $\tau$-derivation of $A$, $A[z';\tau,\d']=A[z;\tau,\d]$ as algebras, and $\D z'=z'\ot r_1+r_2\ot z'$.
Thus, $A[z;\tau,\d]$
has a Hopf algebra structure determined by $(r_1,r_2,x,y)$ if and only if $A[z';\tau,\d']$
has a Hopf algebra structure determined by $(r_1,r_2,0,0)$.
In this case, we have $A[z';\tau,\d']= A[z;\tau,\d]$ as Hopf algebras.

\begin{notation}\nolabel{1.3}
Suppose that $H=A[z;\tau,\d]$ has a Hopf algebra structure determined by $(r_1,r_2,x,y)$.
By \prref{1.2} and the discussion above,
we assume in what follows that $x$ is an $(r_3,r_2)$-primitive element,
$y$ is an $(r_1,r_3)$-primitive element for some group-like element $r_3\in A$.
In particular, we assume that $x=0$ if and only if $y=0$.
\end{notation}

\begin{corollary}
Let $A$ be a Hopf algebra and $H=A[z;\tau,\d]$, $r_1,r_2,x,y\in A$. If
$H$ has a Hopf algebra structure determined by $(r_1,r_2,x,y)$,
then
\begin{eqnarray}
&&\ep(z) = 0 \eqlabel{co1},\\
 && S(z)= r_2^{-1}xr_3^{-1}yr_1^{-1}-r_2^{-1}zr_1^{-1}.\eqlabel{an1}
\end{eqnarray}
\end{corollary}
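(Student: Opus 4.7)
The plan is to read off both identities directly from the formula $\D z = z\ot r_1 + x\ot y + r_2\ot z$ using the counit and antipode axioms, exploiting the classification in \prref{1.2} together with \noref{1.3}.

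For \equref{co1}, I would apply $\ep\ot\mathrm{id}$ to $\D z$ and use $(\ep\ot\mathrm{id})\D z = z$. This yields
\begin{equation*}
\ep(z)r_1 + \ep(x)y + \ep(r_2)z = z.
\end{equation*}
Since $r_2$ is group-like, $\ep(r_2)=1$, so one is left with $\ep(z)r_1 + \ep(x)y = 0$. Under the convention of \noref{1.3} we have $\ep(x)=0$: either $x=0$, or $x$ is $(r_3,r_2)$-primitive, in which case applying $\ep\ot\ep$ to $\D x = x\ot r_3 + r_2\ot x$ forces $\ep(x)=2\ep(x)$, hence $\ep(x)=0$. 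Thus $\ep(z)r_1=0$, and since $r_1\neq 0$ this gives $\ep(z)=0$.

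For \equref{an1}, the plan is to use $m(S\ot\mathrm{id})\D z = \ep(z)1 = 0$. Applied to our comultiplication this reads
\begin{equation*}
S(z)r_1 + S(x)y + S(r_2)z = 0.
\end{equation*}
Since $r_2$ is group-like, $S(r_2) = r_2^{-1}$. To handle $S(x)$, I would do the analogous computation on the $(r_3,r_2)$-primitive element $x$: from $m(S\ot\mathrm{id})\D x = 0$ one obtains $S(x)r_3 + r_2^{-1}x = 0$, i.e.\ $S(x) = -r_2^{-1}xr_3^{-1}$. Substituting and right-multiplying by $r_1^{-1}$ gives the required formula $S(z) = r_2^{-1}xr_3^{-1}yr_1^{-1} - r_2^{-1}zr_1^{-1}$.

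There is no substantive obstacle: the argument is a direct unfolding of axioms once one knows $r_1,r_2$ (and $r_3$ when present) are group-like and $x,y$ are skew-primitive, all guaranteed by \prref{1.2} and the standing assumption in \noref{1.3}. The only minor point to watch is the $x=y=0$ case, where both identities trivially reduce to the usual Hopf-Ore formulas, so no separate argument is required.
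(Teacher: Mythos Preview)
Your approach is exactly the paper's: both equations are read off from $(\ep\ot 1)\D z=z$ and $m(S\ot 1)\D z=\ep(z)1=0$, using \noref{1.3}. One small slip: your argument for $\ep(x)=0$ via $\ep\ot\ep$ gives $\ep(x)=2\ep(x)$, which fails when $\mathrm{char}\,k=2$; instead apply $\ep\ot 1$ to $\D x=x\ot r_3+r_2\ot x$ to get $x=\ep(x)r_3+x$, hence $\ep(x)=0$ in all characteristics.
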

\begin{proof}
The first equation follows from \noref{1.3} and $(\ep\ot{\rm id})\D(z)=z$.
The second equation follows from \noref{1.3} and $(S\ot{\rm id})\D(z)=\ep (z)=0$.
\end{proof}

By \noref{1.3}, we have $\D(z)=z\ot r_1+x\ot y+r_2\ot z$,
$\D(x)=x\ot r_3+r_2\ot x$ and $\D(y)=y\ot r_1+r_3\ot y$. Replacing the
generating element $z$ by $z'=r_3^{-1}z$, the quaternion $(r_1,r_2,x,y)$
by $(r_1',r_2',x',y')=(r_3^{-1}r_1,r_3^{-1}r_2,r_3^{-1}x,r_3^{-1}y)$, we have
$\D(z')=z'\ot r_1'+x'\ot y'+r_2'\ot z'$, $\D(x')=x'\ot 1+r_2'\ot x'$ and
$\D(y')=y'\ot r_1'+1\ot y'$.

\begin{notation}\nolabel{1.5}
Preserving the above notation, we assume in what follows that
if $H=A[z;\tau, \d]$ has a Hopf algebra structure determined by $(r_1,r_2,x,y)$,
then $r_1, r_2\in G(A)$, $x\in P_{1,r_2}(A)$, $y\in P_{r_1, 1}(A)$ and the element $z$
satisfies the relation
\begin{equation}\eqlabel{n1}
    \D(z)=z\ot r_1+x\ot y+r_2\ot z.
\end{equation}
Under this assumption, the equation \equref{an1} becomes
\begin{equation}\eqlabel{n2}
    S(z)=r_2^{-1}(xy-z)r_1^{-1}.
\end{equation}
\end{notation}

Let $Ad_a(b):=\sum a_1bS(a_2)$ for any $a, b\in A$. The next theorem gives
a sufficient and necessary condition for an Ore extension of a Hopf algebra
to have a generalized Hopf-Ore extension structure, which generalizes \cite[Theorem 1.3]{Pa}.

\begin{theorem}\thlabel{1.6}
Let $A$ be a Hopf algebra, $r_1, r_2\in G(A)$, $x\in P_{1,r_2}(A)$, $y\in P_{r_1, 1}(A)$ and $H=A[z;\tau,\d]$. Then
$H$ has a Hopf algebra structure determined by $(r_1,r_2,x,y)$
if and only if the following conditions are satisfied:

{\rm(a)} there is a character $\chi: A\ra k$ such that
$\tau(a)=\sum\chi(a_1)\Ad_{r_1}(a_2)$, $\forall a\in A$;\\
{\rm(b)} $\sum\chi(a_1)\Ad_{r_1}(a_2)=\sum\Ad_{r_2}(a_1)\chi(a_2)$, $\forall a\in A$;\\
{\rm(c)} $\D(\tau(a))(x\ot y)+\D(\d(a))=(x\ot y)\D(a)+\sum\d(a_1)\ot r_1a_2+\sum r_2a_1\ot \d(a_2)$,
$\forall a\in A$.
\end{theorem}

\begin{proof}
Similarly to \cite[Theorem 1.3]{Pa}, the proof can be divided into three steps.
First we show that the comultiplication $\D$ of $A$ can be extended
to $H=A[z;\tau,\d]$ by \equref{n1} if and only if the conditions (a)-(c) are satisfied.
Secondly, we prove that if the conditions (a)-(c) are satisfied,
then $H$ admits an extension of counit from $A$ by \equref{co1}.
Lastly, we show that if the conditions (a)-(c) are satisfied,
then $H$ has antipode $S$ extending the antipode of $A$ by \equref{n2}.

Step 1. Assume that the comultiplication $\D$ of $A$ can be extended to $H$
by \equref{n1}. Then by \equref{d1}, we have $\D(z)\D(a)=\D(\tau(a))\D(z)+\D(\d(a))$.
By \equref{n1}, we have
\begin{equation*}
\begin{split}
    \D(z)\D(a)&=(z\ot r_1+x\ot y+r_2 \ot z)(\sum a_1\ot a_2)\\
    &=\sum\tau(a_1)z\ot r_1a_2+\sum\d(a_1)\ot r_1a_2+\sum(x\ot y)(a_1\ot a_2)\\
    &\quad +\sum r_2a_1\ot\tau(a_2)z+\sum r_2a_1\ot\d(a_2)\\
   &=\sum(\tau(a_1)\ot r_1a_2)(z\ot 1)+\sum(r_2a_1\ot \tau(a_2))(1\ot z)\\
   &\quad +\sum\d(a_1)\ot r_1a_2+\sum(x\ot y)(a_1\ot a_2)+\sum r_2a_1\ot \d(a_2)
\end{split}
\end{equation*}
and
\begin{equation*}
\begin{split}
    \D(\tau(a))\D(z)+\D(\d(a))
    &=\sum(\tau(a)_1\ot \tau(a)_2)(z\ot r_1+x\ot y+r_2\ot z)+\D(\d(a))\\
    &=\sum(\tau(a)_1\ot \tau(a)_2 r_1)(z\ot 1)+\sum(\tau(a)_1 r_2\ot \tau(a)_2)(1\ot z)\\
    &\quad +\sum(\tau(a)_1\ot\tau(a)_2)(x\ot y)+\D(\d(a)).
\end{split}
\end{equation*}
It follows that
\begin{eqnarray}
  \D(\tau(a))&=&\tau(a_1)\ot r_1a_2r_1^{-1} \eqlabel{e1}\\
  \D(\tau(a))&=& r_2a_1r_2^{-1}\ot \tau(a_2)\eqlabel{e2}\\
\nonumber\D(\tau(a))(x\ot y)+\D(\d(a))&=&\sum\d(a_1)\ot r_1a_2+\sum(x\ot y)(a_1\ot a_2)+\sum r_2a_1\ot \d(a_2)
\end{eqnarray}
for any $a\in A$. The last equation coincides with the equation in (c).

Define $\chi: A\ra A$ by $\chi(a):=\sum\tau(a_1)r_1S(a_2)r_1^{-1}$, where $S$ is the antipode of $A$. Then
\begin{equation*}\begin{split}
    \D(\chi(a))&=\sum(\tau(a_1)\ot r_1a_2r_1^{-1})(r_1S(a_4)r_1^{-1}\ot r_1S(a_3)r_1^{-1})\\
    &=\sum\tau(a_1)r_1S(a_4)r_1^{-1}\ot r_1a_2S(a_3)r_1^{-1}\\
    &=\sum\tau(a_1)r_1S(a_2)r_1^{-1}\ot 1=\chi(a)\ot 1.
\end{split}\end{equation*}
Hence $\chi(a)\in k$, and so $\chi$ can be regarded as a linear map from $A$ to $k$.
It is easy to check that $\chi(1)=1$ and $\chi(ab)=\chi(a)\chi(b)$ for any $a, b\in A$.
Hence $\chi$ is a character of $A$. One can recover $\tau $ from $\chi$ as follows:
$\sum\chi(a_1)r_1a_2r_1^{-1}=\sum\tau(a_1)r_1S(a_2)r_1^{-1}r_1a_3r_1^{-1}=\tau(a)$.
This proves (a). Then by \equref{e1} and \equref{e2}, we have
\begin{equation*}
  \sum \chi(a_1)r_1a_2r_1^{-1}\ot r_1a_3r_1^{-1}=\sum r_2a_1r_2^{-1}\ot \chi(a_2)r_1a_3r_1^{-1}.
\end{equation*}
Applying ${\rm id}\ot \ep$ to both sides, one gets  $\sum\chi(a_1)r_1a_2r_1^{-1}=\sum r_2a_1r_2^{-1}\chi(a_2)$.
This proves (b).

Conversely, assume that conditions (a)-(c) are satisfied. Then
\equref{e1} and \equref{e2} are clearly satisfied too. Thus, by the computation above,
one can see that the comultiplication $\D$ of $A$ can be extended
to $H=A[z;\tau,\d]$ by \equref{n1}.

Step 2. Assume that the conditions (a)-(c) are satisfied. Note that $\ep(x)=\ep(y)=0$.
Applying $\ep\ot \ep$ to the equation in (c), one gets
$\ep(\d(a))=\ep(\d(a))+\ep(\d(a))$, and so $\ep(\d(a))=0$ for all $a\in A$.
Thus, if we put $\ep(z)=0$, then $\ep(z)\ep(a)=\ep(\tau(a))\ep(z)+\ep(\d(a))$ for all $a\in A$.
Therefore, $H$ admits an extension of counit from $A$ by \equref{co1}.

Step 3. Assume that the conditions (a)-(c) are satisfied. In order to show that
$H$ has antipode $S$ extending the antipode of $A$ by \equref{n2}, it is
enough to show that if we put $S(z)=r_2^{-1}(xy-z)r_1^{-1}$ then $S(a)S(z)=S(z)S(\tau(a))+S(\d(a))$
for all $a\in A$.

By $\ep(a)1=\sum a_1S(a_2)$, we have
\begin{equation}\eqlabel{e4a}
    0=\d(\ep(a)1)=\sum\d(a_1S(a_2))=\sum\d(a_1)S(a_2)+\sum\tau(a_1)\d(S(a_2)).
\end{equation}
Applying $m\circ({\rm id}\ot S)$ to the equation in (c), we have
\begin{equation*}
    \ep(a)xyr_1^{-1}=\sum\tau(a)_1xyr_1^{-1}S(\tau(a)_2)+\sum\d(a_1)S(a_2)r_1^{-1}+\sum r_2a_1S(\d(a_2)).
\end{equation*}
Then by (a), (b), \equref{e2}, \equref{e4a} and the above equation, we have
\begin{equation*}
    \begin{split}
   S(a)r_2^{-1}xyr_1^{-1}=&\sum S(a_1)r_2^{-1}\ep(a_2)xyr_1^{-1}\\
   =&\sum S(a_1)r_2^{-1}\tau(a_2)_1xyr_1^{-1}S(\tau(a_2)_2)+\sum S(a_1)r_2^{-1}\d(a_2)S(a_3)r_1^{-1}\\
   &+\sum S(a_1)r_2^{-1}r_2a_2S(\d(a_3))\\
  =&r_2^{-1}xyr_1^{-1}S(\tau(a))+\sum S(a_1)r_2^{-1}\d(a_2)S(a_3)r_1^{-1}+S(\d(a))\\
   =&r_2^{-1}xyr_1^{-1}S(\tau(a))-\sum r_2^{-1}\chi(a_1)\d(S(a_2))r_1^{-1}+S(\d(a)).
   \end{split}
\end{equation*}
Again by (a) and (b), we have
\begin{equation*}
S(a)r_2^{-1}=\sum r_2^{-1}\chi(a_1)r_2S(a_3)r_2^{-1}\chi(S(a_2))=r_2^{-1}\chi(a_1)\tau(S(a_2)).
\end{equation*}
Now putting $S(z)=r_2^{-1}(xy-z)r_1^{-1}$. Then for any $a\in A$, we have
\begin{equation*}
    \begin{split}
    S(a)S(z)&=S(a)r_2^{-1}(xy-z)r_1^{-1}=S(a)r_2^{-1}xyr_1^{-1}-S(a)r_2^{-1}zr_1^{-1}\\
    &=r_2^{-1}xyr_1^{-1}S(\tau(a))-\sum r_2^{-1}\chi(a_1)\d(S(a_2))r_1^{-1}+S(\d(a))\\
    &\hspace{0.3cm}-\sum r_2^{-1}\chi(a_1)\tau(S(a_2))zr_1^{-1}\\
    &=r_2^{-1}xyr_1^{-1}S(\tau(a))-r_2^{-1}z\chi(a_1) S(a_2)r_1^{-1}+S(\d(a))\\
    &=r_2^{-1}xyr_1^{-1}S(\tau(a))-r_2^{-1}z r_1^{-1} S(\tau(a))+S(\d(a))\\
    &=S(z)S(\tau(a))+S(\d(a)).
    \end{split}
\end{equation*}
This completes the proof.
\end{proof}

\begin{corollary}\colabel{1.7}
Let $A$ be a Hopf algebra and $H=A[z;\tau,\d]$. If
$H$ has a Hopf algebra structure determined by some $(r_1,r_2,x,y)$ of elements $A$,
then

{\rm(a)} $\chi$ is (convolution) invertible in $A^*$ with $\chi^{-1}=\chi \circ S$,
       where $\chi$ is the character determined by $\tau$ as in \thref{1.6}(a);\\
{\rm(b)} $\tau$ is an algebra automorphism with
       $\tau^{-1}(a)=\sum\chi^{-1}(a_1)r_1^{-1}a_2r_1=\sum r_2^{-1}a_1r_2\chi^{-1}(a_2)$ for all $a\in A$;\\
{\rm(c)} $r_1r_2=r_2r_1$.
\end{corollary}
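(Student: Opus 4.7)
My plan is to settle the three parts in order, feeding each into the next. For (a), I would use that $\chi$ is an algebra map to $k$ and $k$ is commutative, so $\chi\circ S$ is again multiplicative (the anti-multiplicativity of $S$ becomes multiplicativity after landing in a commutative ring). The antipode axiom then immediately gives $(\chi * (\chi\circ S))(a) = \chi(a_1 S(a_2)) = \chi(1)\ep(a) = \ep(a)$, and symmetrically on the other side, so $\chi^{-1}=\chi\circ S$. In particular, for any group-like $g\in A$, $\chi(g)\chi^{-1}(g)=\ep(g)=1$, so $\chi$ sends group-likes to units of $k$ --- a fact that will be needed in both (b) and (c).

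For (b), the idea is to exhibit $\tau$ as a composition of two obviously invertible algebra maps. Define the character twist $\s_\chi:A\to A$ by $\s_\chi(a)=\chi(a_1)a_2$; a quick check using centrality of $\chi(a_1)\in k$ together with multiplicativity of $\chi$ shows $\s_\chi$ is an algebra endomorphism, and (a) yields $\s_\chi^{-1}=\s_{\chi^{-1}}$. Equation \equref{t1} exhibits $\tau=\Ad_{r_1}\circ \s_\chi$, so $\tau$ is an algebra automorphism with $\tau^{-1}=\s_{\chi^{-1}}\circ \Ad_{r_1^{-1}}$. Unfolding the right-hand side via $\D(r_1^{-1}a r_1)=r_1^{-1}a_1 r_1\ot r_1^{-1}a_2 r_1$ and using $\chi^{-1}(r_1^{-1}a_1 r_1)=\chi^{-1}(a_1)$ (since scalars in $k$ commute and $\chi^{-1}(r_1)\chi^{-1}(r_1^{-1})=1$) produces the first formula for $\tau^{-1}$. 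The parallel factorisation $\tau=\s'\circ \Ad_{r_2}$ with $\s'(a)=a_1\chi(a_2)$ --- which is exactly what \equref{t2} asserts --- yields the second formula by the same reasoning, and equality of the two expressions is then automatic.

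For (c), I would evaluate \equref{t2} at the group-like element $a=r_2$: the left-hand side collapses to $\chi(r_2)r_1 r_2 r_1^{-1}$ and the right-hand side to $\chi(r_2)r_2$. Since $\chi(r_2)\in k^{\ti}$ by (a), it may be cancelled, yielding $r_1 r_2 r_1^{-1}=r_2$, i.e.\ $r_1 r_2 = r_2 r_1$. (Evaluating at $a=r_1$ would work equally well.)

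No real obstacle stands in the way; the only bookkeeping step that warrants care is the simplification $\chi^{-1}(r_1^{-1}a_1 r_1)=\chi^{-1}(a_1)$ used in (b), which is a one-line consequence of $\chi^{-1}$ being an algebra map. The main conceptual point, which organises the whole proof, is the recognition that the formula $\tau(a)=\chi(a_1)\Ad_{r_1}(a_2)$ is forced to be invertible the moment the character twist and the inner automorphism are.
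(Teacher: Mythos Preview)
Your proposal is correct and essentially matches the paper's own proof. The paper dispatches (a) and (b) with the single sentence ``one can directly check that $\chi\chi^{-1}=\chi^{-1}\chi=\ep$ and $\tau\tau^{-1}=\tau^{-1}\tau=id$,'' while you supply the underlying mechanism --- factoring $\tau$ as $\Ad_{r_1}\circ\s_\chi$ (and as $\s'\circ\Ad_{r_2}$ via \equref{t2}) so that invertibility is inherited from the factors --- which is a cleaner way to package the same direct verification. For (c) the paper evaluates \equref{t2} at $a=r_1$ rather than at $a=r_2$; as you already note, the two choices are interchangeable.
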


\begin{proof}
 (a) is known. (b) follows from a straightforward verification.
By \thref{1.6}(b), we have $\sum\chi(a_1)r_1a_2r_1^{-1}=\sum r_2a_1r_2^{-1}\chi(a_2)$.
Taking $a=r_1$, then  $\chi(r_1)r_1r_1r_1^{-1}=r_2r_1r_2^{-1}\chi(r_1)$.
Since $\chi(r_1)\neq 0$, we have $r_1r_2=r_2r_1$. This shows (c).
\end{proof}
\begin{corollary}
Let $A$ be a Hopf algebra and $H=A[z;\tau,\d]$. Assume that
$H$ has a Hopf algebra structure determined by some $(r_1,r_2,x,y)$ of elements of $A$.

{\rm(a)} If $A$ is cocommutative, then $r_1^{-1}r_2,r_2^{-1}r_1\in Z(A)$, the center of $A$;\\
{\rm(b)} If $A$ is commutative, then $\chi\in Z(A^*)$, the center of the dual algebra $A^*$ of $A$.
\end{corollary}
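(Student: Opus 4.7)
The starting point for both parts is the identity from \thref{1.6}(b):
\begin{equation*}
\chi(a_1)\,r_1 a_2 r_1^{-1} \;=\; r_2 a_1 r_2^{-1}\,\chi(a_2) \qquad (\forall a\in A). \tag{$\star$}
\end{equation*}
I would aim to eliminate $\chi$ from this relation in each of the two special cases, so that what remains is a statement purely about elements of $A$ (for part (a)) or purely about functionals (for part (b)).

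For part (b), commutativity of $A$ collapses $(\star)$ immediately: both adjoint actions are trivial, leaving $\chi(a_1) a_2 = a_1 \chi(a_2)$. Applying any $f\in A^*$ to both sides gives $\chi(a_1) f(a_2) = f(a_1)\chi(a_2)$, which is precisely $\chi * f = f * \chi$. Since $f$ was arbitrary, $\chi\in Z(A^*)$. This is essentially a one-line argument.

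For part (a), the key move is to absorb the two adjoint actions into a single one. Conjugating $(\star)$ by $r_2$ (multiplying by $r_2^{-1}$ on the left and $r_2$ on the right, using that $\chi(a_i)\in k$) yields
\begin{equation*}
\chi(a_1)\,\Ad_{r_2^{-1}r_1}(a_2) \;=\; a_1\,\chi(a_2).
\end{equation*}
Now I would peel off the $\chi$ on the right by convolving with $\chi^{-1}$: replace $a$ by $a_1$, multiply by $\chi^{-1}(a_2)$, and use $\chi*\chi^{-1}=\ep$ together with coassociativity to arrive at
\begin{equation*}
\chi(a_1)\,\Ad_{r_2^{-1}r_1}(a_2)\,\chi^{-1}(a_3) \;=\; a.
\end{equation*}
At this point cocommutativity finishes the job: in a cocommutative Hopf algebra the symmetric group acts trivially on $\Delta^{2}(a)=a_1\otimes a_2\otimes a_3$, so I can swap indices $1$ and $2$ on the left side to get $\Ad_{r_2^{-1}r_1}(a_1)\,\chi(a_2)\,\chi^{-1}(a_3) = \Ad_{r_2^{-1}r_1}(a)$. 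Therefore $\Ad_{r_2^{-1}r_1}(a)=a$ for every $a\in A$, which is exactly $r_2^{-1}r_1\in Z(A)$. The second element $r_1^{-1}r_2$ is just the inverse (and we know from the previous corollary that $r_1$ and $r_2$ commute, though this is not even needed: the inverse of a central unit is central).

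The main obstacle, as usual with Sweedler manipulations, is bookkeeping: making sure the convolution with $\chi^{-1}$ is applied on the correct side and that the subsequent appeal to cocommutativity really does permute the Sweedler factors legitimately (this is why I separated out the conjugation-by-$r_2$ step first, so that the resulting identity has a clean $a_1 \chi(a_2)$ on the right). Once that is set up, each step is a routine convolution identity.
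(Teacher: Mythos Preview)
Your proof is correct. Part (b) is verbatim the paper's argument. For part (a), both you and the paper use the same two ingredients---cocommutativity and the convolution-invertibility of $\chi$---but in a different order: the paper applies cocommutativity to $(\star)$ first to obtain $\chi(a_1)a_2\,r_1^{-1}r_2 = r_1^{-1}r_2\,\chi(a_1)a_2$, and then invokes bijectivity of the map $a\mapsto \chi(a_1)a_2$ (whose inverse is precisely convolution with $\chi^{-1}$) to conclude centrality of $r_1^{-1}r_2$; you instead convolve with $\chi^{-1}$ first and apply cocommutativity afterwards to get $\Ad_{r_2^{-1}r_1}=\mathrm{id}$ directly. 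The two arguments are reorderings of the same Sweedler computation, and your version has the minor advantage of not needing to name the auxiliary automorphism or argue its surjectivity.
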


\begin{proof}
(a) Assume that $A$ is cocommutative. It suffices to show $r_1^{-1}r_2\in Z(A)$
since $r_2^{-1}r_1$ is the inverse of $r_1^{-1}r_2$.
By \thref{1.6}(b), $\sum\chi(a_1)r_1a_2r_1^{-1}=\sum r_2a_1r_2^{-1}\chi(a_2)=\sum r_2a_2r_2^{-1}\chi(a_1)$,
which implies that $\sum\chi(a_1)a_2r_1^{-1}r_2=\sum\chi(a_1)r_1^{-1}r_2a_2$  for any $a\in A$.
Hence $ar_1^{-1}r_2=\sum\chi^{-1}(a_1)\chi(a_2)a_3r_1^{-1}r_2
=\sum\chi^{-1}(a_1)\chi(a_2)r_1^{-1}r_2a_3=r_1^{-1}r_2a$  for any $a\in A$,
and so $r_1^{-1}r_2\in Z(A)$.

(b) Assume $A$ is commutative. Then the equation in \thref{1.6}(b) becomes
$\sum\chi(a_1)a_2=\sum a_1\chi(a_2)$ for all $a\in A$.
Hence for any $f\in A^*$, we have $\sum\chi(a_1)f(a_2)=\sum f(a_1)\chi(a_2)$.
Thus, $\chi\in Z(A^*)$.
\end{proof}

\begin{corollary}\colabel{1.9}
If $x\ot y$ satisfies $\D(\tau(a))(x\ot y)=(x\ot y)\D(a)$ for any $a\in A$,
then the equation in  \thref{1.6}(c) becomes
\begin{equation}\eqlabel{t3'}
 \D(\d(a))=\sum\d(a_1)\ot r_1a_2+\sum r_2a_1\ot\d(a_2).
  \end{equation}
\end{corollary}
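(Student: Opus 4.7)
The plan is essentially a one-line substitution, so I will just describe why nothing further is needed. The hypothesis $(\D\tau(a))(x\ot y)=(x\ot y)\D a$ asserts exactly that the first summand on the left-hand side of \equref{t3} is equal to the first summand on the right-hand side of \equref{t3}. So I would start from the identity \equref{t3} as proven in \thref{1.6}(c), namely
\begin{equation*}
\D\tau(a)(x\ot y)+\D\d(a)=(x\ot y)\D a+\d(a_1)\ot r_1a_2+r_2a_1\ot \d(a_2),
\end{equation*}
and simply subtract $\D\tau(a)(x\ot y)=(x\ot y)\D a$ from both sides. This cancels the pair $\D\tau(a)(x\ot y)$ and $(x\ot y)\D a$ and leaves precisely
\begin{equation*}
\D(\d(a))=\d(a_1)\ot r_1a_2+r_2a_1\ot \d(a_2),
\end{equation*}
which is the relation $\equref{t3}'$ claimed in the corollary.

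Since the derivation is purely a cancellation within the already-established identity \equref{t3}, there is no real obstacle; in particular no additional Hopf-algebraic manipulation (no use of $\chi$, the antipode, or the group-like conditions) is required beyond what is packaged inside \thref{1.6}. The only thing worth remarking on in the writeup is that the hypothesis is an equality in $A\ot A$, and both sides of \equref{t3} live in $A\ot A$, so the subtraction is legitimate termwise and no base change of modules is involved.
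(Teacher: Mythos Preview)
Your proposal is correct and matches the paper's approach exactly: the paper's own proof is simply ``It is clear,'' which amounts to precisely the cancellation you describe. Nothing more is needed.
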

\begin{proof}
It is clear.
\end{proof}

\begin{notation}\nolabel{1.10}
Let $A$ be a Hopf algebra and $H=A[z;\tau,\d]$ which
has a Hopf algebra structure determined by some $(r_1,r_2,x,y)$ of elements of $A$.
Denote the generalized Hopf-Ore extension $H=A[z;\tau,\d]$ by $H=A(\chi,r_1,r_2,x,y,\d)$, where
$\chi:A\ra k$ is a character such that $\tau(a)=\sum\chi(a_1)\Ad_{r_1}(a_2)$, $r_1$ and $r_2$ are group-like
elements, $x$ is a $(1,r_2)$-primitive element, $y$ is a $(r_1,1)$-primitive element,
and the equations in \thref{1.6}(b) and (c) are satisfied for $\{\chi, r_1, r_2, x, y, \d\}$.
\end{notation}

Two Hopf-Ore extensions $H=A(\chi,r_1,r_2,x,y,\d)$
and $H'=A'(\chi',r_1',r_2',x',y',\d')$ of Hopf algebras  $A$ and $A'$ are said to be isomorphic
if there is a Hopf algebra isomorphism $\Psi:H\ra H'$ such that $\Psi(A)=A'$.

Let $m',1,\D',\ep',S'$ denote the multiplication, the unit,
the comultiplication, the counit and the antipode of $H'$, respectively.

\begin{proposition}\prlabel{1.11}
Two Hopf-Ore extensions $A(\chi,r_1,r_2,x,y,\d)$
and $A'(\chi',r_1',r_2',x',y',\d')$ are isomorphic
if there exists a scalar $\lambda\in k^{\ti}$, a group-like element
$r\in G(A')$, an element $b\in A'$ and a Hopf algebra isomorphism $\Phi: A\ra A'$
such that

{\rm(a)} $\Phi(r_i)=rr_i'$, $i=1,2$,\\
{\rm(b)} $\D'(b)=b\ot r_1'+r_2'\ot b+\lambda r^{-1}\Phi(x)\ot r^{-1}\Phi(y)-x'\ot y'$,
  hence $\ep'(b)=0$,\\
{\rm(c)} $\chi'\Phi =\chi$,\\
{\rm(d)} $\d'=\lambda r^{-1}\Phi\d \Phi^{-1}+\d''$,

where $\d''$ is an inner $\tau'$-derivation of $A'$ defined by $\d''(a')=\tau'(a')b-b a'$
for all $a'\in A'$, $\tau'$ is determined by $\chi'$ as in \thref{1.6}(a).
The converse holds if $A$ (or $A'$) has no zero-divisors.
\end{proposition}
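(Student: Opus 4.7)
The plan is to prove both directions, with the bulk of the work in the \emph{only if} direction.

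Suppose $\Psi\colon H\to H'$ is a Hopf algebra isomorphism with $\Psi(A)=A'$, and set $\Phi=\Psi|_A$, a Hopf algebra isomorphism $A\to A'$. Write $\Psi(z)=\sum_{i\ge 0}a_i(z')^i$ in the free left $A'$-basis $\{(z')^i\}$ of $H'$, and regard $H'\ot H'$ as bigraded by $z'$-degree in each tensor factor. The right-hand side of $\D'\Psi(z)=(\Psi\ot\Psi)\D z=\Psi(z)\ot\Phi(r_1)+\Phi(x)\ot\Phi(y)+\Phi(r_2)\ot\Psi(z)$ has no mixed-bidegree $(i,j)$ components with $i,j\ge 1$, whereas for $i\ge 2$ the expansion $\D'((z')^i)=(\D'z')^i$ produces nonzero contributions in such mixed bidegrees; a careful tracking shows these cannot cancel across different $i$, forcing $a_i=0$ for $i\ge 2$.

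So $\Psi(z)=a_0+a_1z'$. The bidegree-$(1,0)$ part of $\D'\Psi(z)=(\Psi\ot\Psi)\D z$ reads $\D'(a_1)(1\ot r_1')=a_1\ot\Phi(r_1)$; applying $\ep'\ot 1$ and setting $\l:=\ep'(a_1)\in k^{\ti}$ and $r:=\Phi(r_1)(r_1')^{-1}$, one gets $a_1=\l r$ and $\Phi(r_1)=rr_1'$. The bidegree-$(0,1)$ part analogously gives $\Phi(r_2)=rr_2'$, and then $\D'(a_1)=\l r\ot r$ forces $r$ to be group-like, establishing (a). Setting $b:=\l^{-1}r^{-1}a_0$ and comparing the bidegree-$(0,0)$ parts, using $\D'(a_0)=\l(r\ot r)\D'(b)$ and $\D'(a_1)(x'\ot y')=\l(rx'\ot ry')$, yields (b) after multiplying through by $\l^{-1}(r^{-1}\ot r^{-1})$.

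Now $\Psi$ preserves the Ore relation $za=\tau(a)z+\d(a)$; substituting $\Psi(z)=\l r(z'+b)$ and $z'\Phi(a)=\tau'(\Phi(a))z'+\d'(\Phi(a))$, the coefficient of $z'$ forces $\tau'(\Phi(a))=r^{-1}\Phi(\tau(a))r$. Expanding both $\tau,\tau'$ via \thref{1.6}(a) and using $\Phi(r_1)=rr_1'$ reduces this to $\chi'(\Phi(a_1))\Phi(a_2)=\chi(a_1)\Phi(a_2)$; applying $\ep'$ yields $\chi'\Phi=\chi$, proving (c). The constant-in-$z'$ coefficient gives $\d'(\Phi(a))=\tau'(\Phi(a))b-b\Phi(a)+\l^{-1}r^{-1}\Phi(\d(a))$, which is (d) after substituting $a'=\Phi(a)$. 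For the converse, given $(\l,r,b,\Phi)$ satisfying (a)--(d), define $\Psi|_A=\Phi$ and $\Psi(z)=\l r(z'+b)$ and extend to an algebra map: (c) and (d) ensure the Ore relation is preserved, while (a), (b) imply $\D'\Psi(z)=(\Psi\ot\Psi)\D z$ by direct computation. Applying $\ep'\ot 1$ to (b), with $\ep(x)=\ep(y)=\ep'(x')=\ep'(y')=0$, forces $\ep'(b)=0$ and hence $\ep'\Psi(z)=0=\ep(z)$, so $\Psi$ is a bialgebra---hence Hopf algebra---map. Surjectivity holds since $A'$ and $z'=(\l r)^{-1}\Psi(z)-b$ both lie in $\Psi(H)$; injectivity follows since the leading $z'$-coefficient of $\Psi(z^n)$ is the invertible element $\l^n r\,\tau'(r)\cdots(\tau')^{n-1}(r)$.

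The main obstacle is the initial degree reduction. Showing rigorously that no cancellation occurs among the mixed-bidegree contributions of $\D'(a_i(z')^i)$ for $i\ge 2$ requires some care; alternatively, one can apply the same analysis to $\Psi^{-1}(z')$ and compose, forcing the product of the two degrees to equal one. Once the degree-one reduction is in hand, the remaining extraction of (a)--(d) and the verification of the converse are routine bookkeeping with the comultiplication, counit, and Ore relation.
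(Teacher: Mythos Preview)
Your proof is correct and follows essentially the same route as the paper: write $\Psi(z)$ in the $z'$-basis, compare coalgebra data bidegree by bidegree to extract $(\l,r,b)$ and conditions (a)--(b), then compare the Ore relation to extract (c)--(d); conversely, build $\Psi$ from the data and check it is a bialgebra isomorphism.

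Two minor differences are worth recording. First, the paper simply \emph{asserts} that $\Psi(z)=\gamma z'+\eta$ is linear in $z'$ (and then uses $\Psi^{-1}$ to show $\gamma$ is a unit), whereas you actually argue for the degree bound via the bidegree analysis of $\D'\Psi(z)$; your acknowledged alternative of composing with $\Psi^{-1}$ to force the product of the two degrees to be one is exactly how the paper gets invertibility of $\gamma$, though the paper does not use it for the degree reduction itself. Second, in the converse direction you invoke the standard fact that a bialgebra map between Hopf algebras automatically commutes with the antipodes, so once $\Psi$ is a bialgebra isomorphism you are done. The paper instead verifies $S'\Psi(z)=\Psi S(z)$ by an explicit computation using \equref{n2} and condition (b). Your shortcut is cleaner; the paper's direct check has the virtue of being self-contained.
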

\begin{proof}
Let $H=A(\chi,r_1,r_2,x,y,\d)$ and $H'=A'(\chi',r_1',r_2',x',y',\d')$,
and let $\tau$ and $\tau'$ be the algebra automorphisms of $A$ and $A'$ induced by $\chi$ and $\chi'$
as in \thref{1.6}(a), respectively. Then $\tau$ and $\tau'$ are algebra automorphisms by \coref{1.7}(b).

Assume that there exists some $\lambda\in k^{\ti}$, a group-like element
$r\in G(A')$, $b\in A'$ and a Hopf algebra isomorphism $\Phi: A\ra A'$
such that the conditions (a)-(d) are satisfied.
Let $\Psi(a)=\Phi(a)$ for all $a\in A$ and $\Psi(z)=\lambda^{-1}r(z'+b)$. Then a straightforward
computation shows that
$\Psi$ can be uniquely extended to an algebra isomorphism from $H$ to $H'$. Furthermore, one can check that
$\D'(\Psi(z))=(\Psi\ot \Psi)(\D(z))$ and $\ep'(\Psi(z))=\ep(z)$. Hence $\Psi$ is a
bialgebra isomorphism. Consequently, $\Psi$ is a
Hopf algebra isomorphism since any bialgebra map between two Hopf algebras is a Hopf algebra map.

Conversely, assume that $A$ (or $A'$) has no zero-divisors and that
there is a Hopf algebra isomorphism $\Psi:H\ra H'$ such that $\Psi(A)=A'$.
Then $\Phi=\Psi|_A$ is a Hopf algebra isomorphism from $A$ to $A'$, and so
neither of $A'$ and $A$ has zero-divisors. Hence
$\Psi(z)=\gamma'z'+\eta'$ for some $\gamma', \eta'\in A'$ and $\gamma'\neq 0$.
Similarly, $\Psi^{-1}(z')=\gamma z+\eta$ for some $\gamma, \eta\in A$ and $\gamma\neq 0$.
Thus $z'=\Psi\Psi^{-1}(z')=\Phi(\gamma)\gamma'z'+\Phi(\gamma)\eta'+\Phi(\eta)$.
By comparing the coefficients of $z'$, we have $\Phi(\gamma)\gamma'=1$.
Similarly, one gets $\Phi^{-1}(\gamma')\gamma=1$ from $z=\Psi^{-1}\Psi(z)$.
Applying $\Phi$ on it, we have $\gamma'\Phi(\gamma)=1$.
Thus $\gamma'$ is invertible with the inverse $\Phi(\gamma)$.

Since $\Psi$ is a coalgebra map, $\D'(\Psi(z))=(\Psi\ot \Psi)(\D(z))$
and $\ep'(\Psi(z))=\ep(z)$.
Hence we have
$\D'(\gamma')(z'\ot r_1'+x'\ot y'+r_2'\ot z')+\D'(\eta')
=(\gamma'z'+\eta')\ot\Phi(r_1)+\Phi(x)\ot \Phi(y)+\Phi(r_2)\ot(\gamma'z'+\eta')$
and $\ep'(\eta')=0$. By comparing the two sides of this equation, we have
\begin{eqnarray}
 \D'( \gamma')(1\ot r_1')&=&\gamma'\ot \Phi(r_1), \eqlabel{e4} \\
 \D'(\gamma')(r_2'\ot 1) &=& \Phi(r_2)\ot \gamma', \eqlabel{e5}\\
 \D'(\gamma')(x'\ot y')+\D'(\eta') &=& \eta'\ot\Phi(r_1)+\Phi(x)\ot\Phi(y)+\Phi(r_2)\ot\eta'. \eqlabel{e6}
 \end{eqnarray}
Applying $\ep'\ot{\rm id}$ to both sides of \equref{e4}, we obtain
$\Phi(r_1)=\ep'(\gamma')^{-1}\gamma'r_1'$ since $\gamma'$ is invertible.
Hence $\ep'(\gamma')^{-1}\gamma'$ is a group-like element.
Similarly, applying ${\rm id}\ot\ep'$ to both sides of \equref{e5}, we have
$\Phi(r_2)=\ep'(\gamma')^{-1}\gamma'r_2'$.
Let $\lambda:=\ep'(\gamma')^{-1}$ and $r:=\lambda\gamma'$.
Then $\lambda\neq 0$, $r$ is a group-like element and $\Phi(r_i)=rr_i'$, $i=1,2$.
Let $b=\gamma'^{-1}\eta'$. Then one gets the first equation in (b)
by multiplying $\D'(\gamma'^{-1})$ on the two sides of \equref{e6} from the left.
From $\ep'(\eta')=0$, one gets $\ep'(b)=0$.

Since $\Psi$ is an algebra map, we have $\Psi(z)\Psi(a)=\Psi(\tau(a))\Psi(z)+\Psi(\d(a))$.
This means that
\begin{equation*}
\gamma'\tau'(\Phi(a))z'+\gamma'\d'(\Phi(a))+\eta'\Phi(a)=\Phi(\tau(a))\gamma'z'+\Phi(\tau(a))\eta'+\Phi(\d(a)).
\end{equation*}
By comparing its two sides, we have
\begin{eqnarray}
 \gamma'\tau'(\Phi(a)) &=&\Phi(\tau(a))\gamma', \eqlabel{e7} \\
 \gamma'\d'(\Phi(a))+\eta'\Phi(a) &=& \Phi(\tau(a))\eta'+\Phi(\d(a)). \eqlabel{e8}
\end{eqnarray}
Since $\Phi$ is a Hopf algebra isomorphism, it follows from \thref{1.6}(a) and  \equref{e7} that
\begin{equation*}
   \begin{split}
   \sum\gamma'\chi'(\Phi(a_1))r_1'\Phi(a_2)(r_1')^{-1}&=\sum\chi(a_1)\Phi(r_1)\Phi(a_2)\Phi(r_1)^{-1}\gamma'\\
   &=\sum\chi(a_1)rr_1'\Phi(a_2)(r_1')^{-1}r^{-1}\gamma'\\
   &=\sum\chi(a_1)\gamma'r_1'\Phi(a_2)(r_1')^{-1}.
   \end{split}
\end{equation*}
Hence $\sum\chi'(\Phi(a_1))\Phi(a_2)=\sum\chi(a_1)\Phi(a_2)$. Applying $\ep'$ on its both sides,
one gets $\chi'(\Phi(a))=\chi(a)$ for all $a\in A$, i.e., $\chi'\Phi=\chi$.
Since $\Phi$ is bijective, we may substitute $a$ by $\Phi^{-1}(a')$  in \equref{e8}, where $a'\in A'$. Then we have
\begin{equation*}
    \gamma'\d'(a')+\eta'a'=\gamma'\tau'(a')\gamma'^{-1}\eta'+\Phi\d\Phi^{-1}(a'),
\end{equation*}
here we use the relation \equref{e7}.
Define $\d'': A'\rightarrow A'$ by $\d''(a')=\tau'(a')b-b a'$ for all $a'\in A'$.
Then $\d'=\lambda r^{-1}\Phi\d \Phi^{-1}+\d''$ by $\gamma'^{-1}\eta'=b$ and $\lambda^{-1}r=\gamma'$.
\end{proof}

\begin{corollary}\colabel{1.12}
Let $A(\chi, r_1,r_2,x,y,\d)$ be a generalized Hopf-Ore extension of a Hopf algebra $A$.
Then as generalized Hopf-Ore extensions, we have

{\rm(a)} $A(\chi, r_1,r_2,x,y,\d)\cong A(\chi,r_1,r_2,\a x,\b y, \a\b \d)$ for all $\a,\b\in k^{\ti}$.\\
{\rm(b)} $A(\chi, r_1,r_2,\a(1-r_2),y,\d)\cong A(\chi,r_1,r_2,0,0, \d+\d')$, where $\a\in k$ and $\d'(a)=\a(\tau(a)y-ya)$
for all $a\in A$.\\
{\rm(c)} $A(\chi, r_1,r_2,x,\b(1-r_1),\d)\cong A(\chi,r_1,r_2,0,0, \d+\d'')$, where $\b\in k$ and $\d''(a)=\b(\tau(a)x-xa)$
for all $a\in A$.\\
{\rm(d)} Assume that $x$ and $y$ are linearly dependent and $x\notin kG(A)$. Then $r_1=r_2=1$.
Furthermore, if char$(k)\neq 2$, then $A(\chi, 1,1,x,\a x,\d)\cong A(\chi,1,1,0,0, \d+\d')$,
where $\a\in k$ and $\d'(a)=\frac{1}{2}\a(\tau(a)x^2-x^2a)$ for all $a\in A$.\\
{\rm(e)} $A(\chi, r_1,r_2,x,y,\d)\cong A'(\chi\Phi^{-1},\Phi(r_1),\Phi(r_2),\Phi(x),\Phi(y), \Phi\d\Phi^{-1})$,
where $\Phi:A\ra A'$ is a Hopf algebra isomorphism.
\end{corollary}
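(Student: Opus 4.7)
The plan is to apply \prref{1.11} to each of the five cases (a)--(e) with a judiciously chosen quadruple $(\l, r, b, \Phi)$, so that conditions (a)--(d) of that proposition become transparent identities. For parts (a)--(d) I would take $A'=A$, $\Phi=\mathrm{id}_A$, $r=1$, and the same character $\chi$; conditions (a) and (c) of \prref{1.11} are then automatic, and only the choices of $\l$ and $b$ remain to be matched against (b) and (d).

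For (a), set $\l=(\a\b)^{-1}$ and $b=0$; condition (b) of \prref{1.11} collapses to the tautology $\a\b\,x\ot y=\a x\ot \b y$, while (d) yields the new derivation $\a\b\,\d$. For (b), take $\l=1$ and $b=\a y$; expanding $\D y=y\ot r_1+1\ot y$ gives
\begin{equation*}
\D(\a y)=\a y\ot r_1+r_2\ot \a y+\a(1-r_2)\ot y,
\end{equation*}
which is exactly condition (b) of \prref{1.11} with $x'=y'=0$. The inner $\tau$-derivation produced by (d) is then $\tau(a)\a y-\a y\,a=\a(\tau(a)y-ya)$, matching the stated $\d'$. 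Part (c) is entirely symmetric with the choice $b=\b x$.

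For (d), I would first show that $y=\a x$ with $\a\in k^{\ti}$ forces $r_1=r_2=1$ by comparing $\D y=y\ot r_1+1\ot y$ with $\a\D x=\a x\ot 1+\a r_2\ot x$, which yields the equation $x\ot(r_1-1)=(r_2-1)\ot x$ in $A\ot A$, so that $x$ is primitive. With $\mathrm{char}(k)\neq 2$ I then take $\l=1$ and $b=\tfrac12\a x^2$; since $x$ is primitive, $\D(x^2)=(\D x)^2=x^2\ot 1+2\,x\ot x+1\ot x^2$, and this gives precisely condition (b) of \prref{1.11} with $x'=y'=0$. The resulting inner derivation is $\tfrac12\a(\tau(a)x^2-x^2a)$, as claimed. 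Finally, for (e), take $\l=r=1$, $b=0$ and the given $\Phi$; all four conditions of \prref{1.11} hold by direct substitution, since $\chi'\Phi=(\chi\Phi^{-1})\Phi=\chi$.

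The only mildly subtle step is the verification of condition (b) of \prref{1.11} for each $b$: in every case the identity collapses after expanding $\D$ on the primitive elements $x$ and $y$ (or on $x^2$ in case (d)), so no genuine obstacle arises beyond the characteristic hypothesis required in (d).
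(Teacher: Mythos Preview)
Your proposal is correct and essentially the same as the paper's proof: the paper writes down, for (a)--(d), the explicit isomorphism $\Psi\mid_A=\mathrm{id}$, $\Psi(z)=\a^{-1}\b^{-1}z'$, resp.\ $z'+\a y$, $z'+\b x$, $z'+\tfrac12\a x^2$, and invokes \prref{1.11} only for (e), whereas you route all five parts through \prref{1.11} by specifying $(\lambda,r,b,\Phi)$ --- but your choices produce precisely the same maps $\Psi(z)=\lambda r(z'+b)$ as the paper's. Your verification of condition~(b) of \prref{1.11} in each case is in fact slightly more explicit than what the paper provides.
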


\begin{proof}
(a)  Let $A(\chi,r_1,r_2,x,y,\d)=A[z;\tau,\d]$ and $A(\chi,r_1,r_2,\a x,\b y,\a\b\d)=A[z';\tau,\a\b \d]$.
An isomorphism $\Psi: A[z;\tau,\d]\rightarrow A[z';\tau,\a\b \d]$ is given by
$\Psi(a)=a$ for all $a\in A$ and $\Psi(z)=\a^{-1} \b^{-1} z'$.

(b) Let $A(\chi, r_1,r_2,\a(1-r_2),y,\d)=A[z;\tau,\d]$ and $A(\chi,r_1,r_2,0,0, \d+\d')=A[z';\tau,\d+\d']$.
An isomorphism $\Psi: A[z;\tau,\d]\rightarrow A[z';\tau,\d+\d']$ is given by $\Psi(a)=a$ for all $a\in A$ and $\Psi(z)=z'+\a y$.

(c) Let $A(\chi, r_1,r_2,x,\b(1-r_1),\d)=A[z;\tau,\d]$ and $A(\chi,r_1,r_2,0,0, \d+\d'')=A[z';\tau,\d+\d'']$.
An isomorphism $\Psi: A[z;\tau,\d]\rightarrow A[z';\tau,\d+\d'']$ is given by $\Psi(a)=a$ for all $a\in A$ and $\Psi(z)=z'+\b x$.

(d) By \noref{1.5}, $x$ is a $(1,r_2)$-primitive element and
$y$ is an $(r_1,1)$-primitive element. Hence $r_1=r_2=1$ since $x$ and $y$ are linearly dependent
and $x\notin kG(A)$.
Assume that char$(k)\neq 2$. Let $A(\chi, 1, 1, x, \a x, \d)=A[z;\tau,\d]$
and $A(\chi, 1, 1, 0, 0, \d+\d')=A[z';\tau,\d+\d']$.
An isomorphism $\Psi: A[z;\tau,\d]\ra A[z';\tau,\d+\d']$ is given by $\Psi(a)=a$ for all $a\in A$ and $\Psi(z)=z'+\frac{1}{2}\a x^2$.

(e) It follows from \prref{1.11} by putting $\lambda=1, r=1, b=0$ there.
\end{proof}

\begin{example}\relabel{1.13}
Let $A=kG$ be the group algebra of a group $G$ over $k$.
If $H=A[z;\tau,\d]$ has a Hopf algebra structure determined by
$(r_1, r_2, x, y)$, then $x=\a(1-r_2)$, $y=\b(1-r_1)$ for some
$\a,\b\in k^{\ti}$. By \coref{1.12}(b) and (c), $H\cong A(\chi,r_1,r_2,0,0,\d')$
as generalized Hopf-Ore extensions.
Hence $H$ is a usual Hopf-Ore extension.
Thus, it follows from \cite[Propositionp 2.2]{Pa} that
every generalized Hopf-Ore extension of $A=kG$ is isomorphic to a usual Hopf-Ore extension
$A(\chi, 1, r, 0,0,\d)$, where $\chi$ is a group character, $r$ is an element of the center of the group $G$,
and $\d$ is given by $\d(g)=\a(g)(1-r)g$, $g\in G$, for some $1$-cocycle $\a\in Z_{\chi}^1(kG)$.
Note that a $1$-cocycle $\a\in Z_{\chi}^1(kG)$ means a linear map $\a:kG\rightarrow k$
satisfying $\a(gh)=\a(g)+\chi(g)\a(h)$ for all $g, h\in G$.
\end{example}

\begin{example}
The half quantum group $ U_q^{\>0}(\mathfrak{sl}(3))$ is the upper triangular Hopf subalgebra of quantum group
$ U_q(\mathfrak{sl}(3))$. Let $A$ be a Hopf algebra generated by $K_1,K_2,K_1^{-1}$, $K_2^{-1},E_1,E_2$, subject to
the relations
\begin{eqnarray}
&&K_iK_j=K_jK_i,\, K_iK_i^{-1}=K_i^{-1}K_i=1, \\
&&K_iE_jK_i^{-1}=q^{a_{ji}}E_j.
\end{eqnarray}
Then $A$ is a Hopf algebra with the comultiplication $\D$, counit $\ep$ and antipode $S$ given by
$$\begin{array}{l}
\D(E_i)=K_i\ot E_i+E_i\ot 1,\quad \D(K_i)=K_i\ot K_i,\quad \D(K_i^{-1})=K_i^{-1}\ot K_i^{-1},\\
\ep (E_i)=0,\quad  \ep (K_i)=1, \quad S(E_i)=-K_i^{-1}E_i,\quad S(K_i)=K_i^{-1},\\
\end{array}$$
where $q\in k^{\ti}$, $1\<i,j\<2$ and $a_{11}=a_{22}=2$, $a_{12}=a_{21}=-1$.
Define $\tau(K_i)=q^{-1}K_i$, $\tau(K_i^{-1})=qK_i^{-1}$ for $i=1,2$,
$\tau(E_1)=q^{-1}E_1$ and $\tau(E_2)=qE_2$. Then $\tau$ can be uniquely
extended to an algebra automorphism of $A$.
Let $H=A[z;\tau,0]$ be the Ore extension of $A$.
Taking $r_1=1,r_2=K_1K_2,x=(q-q^{-1})K_2E_1$ and $y=E_2$. Then $(q-q^{-1})K_2E_1$
is a $(K_2,K_1K_2)$-primitive element and $E_2$ is a $(1,K_2)$-primitive element.
Let $\D z=z\ot 1+(q-q^{-1})K_2E_1\ot E_2+K_1K_2\ot z$. By \prref{1.2}, $H$ has a
Hopf algebra structure determined by $(1,K_1K_2,(q-q^{-1})K_2E_1,E_2)$.
Let $I=\langle z-E_1E_2+q^{-1}E_2E_1\rangle$ be the ideal of $H$ generated by $z-E_1E_2+q^{-1}E_2E_1$.
Then one can check that $I$ is a Hopf ideal of $H$. Moreover, $H/I$ is isomorphic to the half quantum group
$ U_q^{\>0}(\mathfrak{sl}(3))$ as a Hopf algebra.
\end{example}

\section{Classification of generalized Hopf-Ore extensions of some Hopf algebras}\selabel{2}

In this section, we investigate the generalized Hopf-Ore extensions for
the enveloping algebras of Lie algebras.
In particular, we classify the generalized Hopf-Ore extensions of $U(\mathfrak{g})$
when $\mathfrak{g}$ is a $1$-dimensional or $2$-dimensional Lie algebra
over an arbitrary field, or an $n$-dimensional abelian Lie algebra over a field with characteristic zero.

By the discussion in the last section, each generalized Hopf-Ore extension $A[z;\tau,\d]$
of a Hopf algebra $A$ can be written as $A(\chi,r_1,r_2,x,y,\d)$, where
$\chi:A\ra k$ is a character, $r_1,r_2\in G(A)$, $x\in P_{1,r_2}(A)$, $y\in P_{r_1,1}(A)$
and \thref{1.6}(b)-(c) are satisfied for $(\chi,r_1,r_2,x,y,\d)$.
We also assume that $x=0$ if and only if $y=0$.

\subsection{Generalized Hopf-Ore extensions of $U(\mathfrak g)$}

Let $\mathfrak{g}$ be a Lie algebra and $U(\mathfrak g)$ its enveloping algebra.
Then $U(\mathfrak g)$ is a Hopf algebra as usual. It is well-known that $U(\mathfrak g)$ has no zero-divisors.
Let $\{a_{\l}\mid \l\in \Lambda\}$ be a fixed ordered basis of $\mathfrak g$.
Let us use the notations in \cite[pp. 73]{Mo}. Say functions $\textbf{n}:\Lambda \ra \N$ with finite support if
$\textbf{n}(\l)\neq 0 \Leftrightarrow \l\in \{\l_1,\cdots, \l_m\}$, where $\l_1<\l_2<\cdots<\l_m$.
Then $a^{\textbf{n}}$ denotes the basis monomial
$a_{\l_1}^{\textbf{n}(\l_1)}a_{\l_2}^{\textbf{n}(\l_2)}\cdots a_{\l_m}^{\textbf{n}(\l_m)}$, and any
$b\in U(\mathfrak{g})$ may be written as $b=\sum_{\textbf{n}}\a_{\textbf{n}}a^{\textbf{n}}$, where
$\a_{\textbf{n}}\in k$ are almost all zero. Define $\textbf{m}\<\textbf{n}$ if $\textbf{m}(\l)\<\textbf{n}(\l)$ for
all $\l\in \Lambda$. Let $\textbf{n}!=\Pi_{\l\in \Lambda}\textbf{n}(\l)!$ and $\begin{pmatrix}
\textbf{n} \\
\textbf{m} \\
\end{pmatrix}=\Pi_{\l\in \Lambda}\begin{pmatrix}
\textbf{n}(\l) \\
\textbf{m}(\l) \\
\end{pmatrix}$. Then for all basis monomials $a^{\textbf{n}}$,
\begin{equation}\eqlabel{e9a}
    \D a^{\textbf{n}}=\sum_{0\<\textbf{m}\<\textbf{n}}\begin{pmatrix}
\textbf{n} \\
\textbf{m} \\
\end{pmatrix}a^{\textbf{m}}\ot a^{\textbf{n-m}}.
\end{equation}
\begin{lemma}\lelabel{2.1}(\cite[Proposition 5.5.3]{Mo})
Let $\mathfrak{g}$ be a Lie algebra over $k$ and $H=U(\mathfrak g)$. Then

{\rm(a)} $H$ is connected with $H_0=k1$.\\
{\rm(b)} If char$k=0$, then $P(H)=\mathfrak{g}$.\\
{\rm(c)} If char$k=p>0$, then $P(H)=\hat{\mathfrak{g}}$, the
restricted Lie algebra spanned by all $\{a^{p^r}\mid a\in \mathfrak{g},r\>0\}$.
\end{lemma}

When char$k=p>0$, it follows from \leref{2.1} that $\{a_{\l}^{p^r}\mid \l\in \Lambda, r\>0\}$ is
a basis of $\hat{\mathfrak{g}}$. This is a totally ordered set if we
define $ a_{\l}^{p^r}\<a_{\l'}^{p^s}$ when $\l<\l'$ or $\l=\l'$ and $r\<s$. In particular, $a_{\l}^{p^r}=a_{\l'}^{p^s}$ if and only if $\l=\l'$ and $r=s$.

\begin{lemma}\lelabel{2.2}
Let $I$ be an ordered set and $B=\{b_i\mid i\in I\}$ a basis of $\mathfrak{g}$ (resp., $\hat{\mathfrak{g}}$)
when char$k=0$ (resp., char$k=p>0$). Let
$c\in U(\mathfrak{g})$.

{\rm(a)} If char$k=0$ (resp., char$k=p>2$) and $\D(c)=c\ot 1+1\ot c+\sum_{i,j\in I}\a_{ij}b_i\ot b_j$
for some $\a_{ij}\in k$, then $\a_{ij}=\a_{ji}$ for any $i,j\in I$, i.e.,
$$\D(c)=c\ot 1+1\ot c+\sum_{i,j\in I,i<j}\a_{ij}(b_i\ot b_j+b_j\ot b_i)+\sum_{i\in I}\a_{ii}b_i\ot b_i.$$
In this case, $c-\sum_{i,j\in I,i<j}\a_{ij}b_ib_j-\frac{1}{2}\sum_{i\in I}\a_{ii}b_i^2\in \mathfrak{g}$
(resp., $\in \hat{\mathfrak{g}}$).\\
{\rm(b)} If char$k=2$ and $\D(c)=c\ot 1+1\ot c+\sum_{i,j\in I}\a_{ij}b_i\ot b_j$
for some $\a_{ij}\in k$, then $\a_{ij}=\a_{ji}$ for any $i\neq j$ in $I$ and $\a_{ii}=0$ for any $i\in I$, i.e.,
$$\D(c)=c\ot 1+1\ot c+\sum_{i,j\in I,i<j}\a_{ij}(b_i\ot b_j+b_j\ot b_i).$$
In this case, $c-\sum_{i,j\in I,i<j}\a_{ij}b_ib_j\in \hat{\mathfrak{g}}$.
\end{lemma}
\begin{proof}
(a) follows from the cocommutativity of $U(\mathfrak{g})$ immediately.
For (b), by char$k=2$, $\D(b^2)=b^2\ot 1+1\ot b^2$ for any $b\in \hat{\mathfrak{g}}$.
Hence $b_i\ot b_i$ can't appear in the expression of $\D(c)$.
\end{proof}

The usual Hopf-Ore extension of $U(\mathfrak{g})$ has been
classified in \cite[Proposition 2.3]{Pa} when char$k=0$. We will discuss the generalized Hopf-Ore
extension of $U(\mathfrak{g})$ for an arbitrary field $k$.

Let $H=U(\mathfrak{g})[z;\tau,\d]$ be an Ore extension of $U(\mathfrak{g})$.
If $H$ has a Hopf algebra structure determined by $(r_1,r_2,x,y)$,
then $r_1=r_2=1$, $x$ and $y$ are primitives by \leref{2.1}.
Since $r_1=r_2=1$ and $U(\mathfrak{g})$ is cocommutative,
any character $\chi$ of $U(\mathfrak{g})$ satisfies \thref{1.6}(b).
Since $U(\mathfrak{g})$ is generated by $\mathfrak{g}$ as an algebra,
\thref{1.6}(a) is equivalent to that there exists a  character $\chi$ of $U(\mathfrak{g})$
such that $\tau(a)=a+\chi(a)$ for all $a\in \mathfrak{g}$.
In this case, the equation in \thref{1.6}(c) becomes
\begin{equation}\eqlabel{t3}
\D(\tau(a))(x\ot y)+\D(\d(a))=(x\ot y)\D(a)+\sum\d(a_1)\ot a_2+\sum a_1\ot \d(a_2),
\end{equation}
where $a\in U(\mathfrak{g})$. We claim that if $a, b\in U(\mathfrak{g})$ satisfy \equref{t3}
then so does $ab$. In fact, since $\D(\tau(u))=\D(\sum\chi(u_1)u_2)=\sum\tau(u_1)\ot u_2=\sum u_1\ot\tau(u_2)$
for any $u\in U(\mathfrak{g})$, we have
$$\begin{array}{rl}
&\D(\tau(ab))(x\ot y)+\D(\d(ab))\\
=&\D(\tau(a))\D(\tau(b))(x\ot y)+\D(\d(a))\D(b)+\D(\tau(a))\D(\d(b))\\
=&\D(\tau(a))(\D(\tau(b))(x\ot y)+\D(\d(b)))+\D(\d(a))\D(b)\\
=&\D(\tau(a))((x\ot y)\D(b)+\sum\d(b_1)\ot b_2+\sum b_1\ot \d(b_2))+\D(\d(a))\D(b)\\
=&(\D(\tau(a))(x\ot y)+\D(\d(a)))\D(b)+\D(\tau(a))(\sum\d(b_1)\ot b_2+\sum b_1\ot \d(b_2))\\
=&((x\ot y)\D(a)+\sum\d(a_1)\ot a_2+\sum a_1\ot \d(a_2))\D(b)\\
&+\sum\tau(a_1)\d(b_1)\ot a_2b_2+\sum a_1b_1\ot\tau(a_2)\d(b_2)\\
=&(x\ot y)\D(a)\D(b)+\sum\d(a_1)b_1\ot a_2b_2+\sum a_1b_1\ot \d(a_2)b_2\\
&+\sum\tau(a_1)\d(b_1)\ot a_2b_2+\sum a_1b_1\ot\tau(a_2)\d(b_2)\\
=&(x\ot y)\D(ab)+\sum\d(a_1b_1)\ot a_2b_2+\sum a_1b_1\ot \d(a_2b_2)\\
=&(x\ot y)\D(ab)+\sum\d((ab)_1\ot (ab)_2+\sum(ab)_1\ot\d((ab)_2).\\
\end{array}$$
This shows the claim. Thus, \thref{1.6}(c) is equivalent to that \equref{t3}
is satisfied for any $a\in \mathfrak{g}$ since $U(\mathfrak g)$ is generated
by $\mathfrak g$ as an algebra. On the other hand, when $c\in P(U(\mathfrak g))$,
\equref{t3} becomes
\begin{equation}\eqlabel{e10}
    \D(\d(c))=\d(c)\ot 1+1\ot \d(c)+[x,c]\ot y+x\ot [y,c]-\chi(c)x\ot y,
\end{equation}
where $[u, v]=uv-vu$ for any $u, v\in U(\mathfrak g)$.
Summarizing the discussion above together with \thref{1.6},
we have the following proposition.

\begin{proposition}\prlabel{2.3}
Let $H$ be a generalized Hopf-Ore extension of $U(\mathfrak{g})$.
Then $H$ is isomorphic to $U(\mathfrak g)(\chi,1,1,x,y,\d)$
for some $x,y\in P(U(\mathfrak{g}))$, a character $\chi$ of $U(\mathfrak{g})$
and a $\tau$-derivation $\d$ of $U(\mathfrak{g})$ such that
\equref{e10} is satisfied for all $c\in\mathfrak{g}$,
where $\tau$ is an algebra automorphism of $U(\mathfrak{g})$ determined by
$\tau(c)=c+\chi(c)$, $\forall c\in\mathfrak{g}$.
\end{proposition}

In what follows, denote $U(\mathfrak g)(\chi,1,1,x,y,\d)$ by $U(\mathfrak g)(\chi,x,y,\d)$ simply.
Let ${\rm Aut_{Hopf}}(U(\mathfrak g))$ denote the group of all Hopf algebra automorphisms of $U(\mathfrak g)$.

\subsection{The case of dim($\mathfrak{g}$)=1}
Throughout this subsection, let $\mathfrak{g}=ka$ be a $1$-dimensional Lie algebra.
Then $G(U(\mathfrak{g}))=\{1\}$ by \leref{2.1}(a).

For any $\a\in k$,
there is a character $\chi_{\a}: U(\mathfrak g)\ra k$ determined by $\chi_{\a}(a)=\a$.
Moreover, any character of $U(\mathfrak g)$ is equal to some $\chi_{\a}$ with $\a\in k$.
Note that $\chi_0=\ep$, the counit of $U(\mathfrak g)$.
Each character $\chi_{\a}$ induces an algebra automorphism $\tau_{\a}$
of $U(\mathfrak g)$ given by $\tau_{\a}(a)=a+\a$ as stated in \prref{2.3}.
In particular, $\tau_0$ is exactly the identity map on $U(\mathfrak g)$,
and a $\tau_0$-derivation is a usual derivation. Obviously,
a derivation $\d$ of $U(\mathfrak g)$ is uniquely determined by the value $\d(a)$.
Let $\delta_0$ be the derivation of $U(\mathfrak g)$ determined by $\delta_0(a)=a$.

For any $\a\in k^{\times}$, one can define a Hopf algebra automorphism $\Phi_{\a}$
of $U(\mathfrak g)$ by $\Phi_{\a}(a)=\a a$.
The we have the following lemma.

\begin{lemma}\lelabel{2.5}
The map $\Phi: k^{\times}\ra {\rm Aut}_{\rm Hopf}(U(\mathfrak{g}))$, $\a\mapsto\Phi_{\a}$,
is a group isomorphism.
\end{lemma}
\begin{proof}
It follows from a straightforward verification.
\end{proof}

Now let $H$ be a generalized Hopf-Ore extension of $U(\mathfrak{g})$
Then by \prref{2.3}, $H=U(\mathfrak{g})(\chi_{\a}, x, y, \d)$,
where $\a\in k$, $x, y\in P(U(\mathfrak g))$ and $\d$ is a $\tau_{\a}$-derivation of $U(\mathfrak{g})$
such that \equref{e10} is satisfied for all $c\in\mathfrak{g}$.

In case char$k=0$, $x=\b a$ and $y=\g a$ for some $\b,\g\in k$ by \leref{2.1}.
Note that we always assume that $x=0$ if and only if $y=0$. By \coref{1.12}(d),
$H$ is isomorphic to a usual Hopf-Ore extension $U(\mathfrak{g})(\chi_{\a},0,0,\d')$.
Then by \equref{e10}, $\D(\d'(a))=\d'(a)\ot 1+1\ot\d'(a)$, and so $\d'(a)\in\mathfrak{g}$ by \leref{2.1}.

\begin{proposition}\prlabel{2.5}
Assume char$k=0$. Then up to isomorphism, there are three generalized Hopf-Ore extensions of $U(\mathfrak{g})$:
$U(\mathfrak{g})(\ep,0,0,0)$, $U(\mathfrak{g})(\ep,0,0,\d_0)$,
$U(\mathfrak{g})(\chi_1,0,0,0)$, where $\d_0$ is the derivation of $U(\mathfrak g)$
given before.
\end{proposition}
\begin{proof}
We first show that $U(\mathfrak{g})(\ep,0,0,0)$, $U(\mathfrak{g})(\ep,0,0,\d_0)$ and
$U(\mathfrak{g})(\chi_1,0,0,0)$ are not isomorphic to each other as generalized
Hopf-Ore extensions of $U(\mathfrak{g})$.
Since $\ep\Phi=\ep\neq\chi_1$ for any Hopf algebra automorphism $\Phi$ of $U(\mathfrak g)$,
it follows from \prref{1.11} that both $U(\mathfrak{g})(\ep,0,0,0)$ and $U(\mathfrak{g})(\ep,0,0,\d_0)$
are not isomorphic to $U(\mathfrak{g})(\chi_1,0,0,0)$.
Suppose that $U(\mathfrak{g})(\ep,0,0,0)$ and $U(\mathfrak{g})(\ep,0,0,\d_0)$
were isomorphic generalized Hopf-Ore extensions of $U(\mathfrak g)$.
Since $G(U(\mathfrak g))=\{1\}$, it follows from \prref{1.11} that
there exists an element $b\in U(\mathfrak{g})$
such that $\D(b)=b\ot 1+1\ot b$ and $\d_0=\d''$, where $\d''$ is an inner derivation of
$U(\mathfrak g)$ defined by $\d''(a')=a'b-ba'$ for all $b'\in U(\mathfrak g)$. Since $U(\mathfrak g)$ is commutative,
$\d''=0$. This shows $\d_0=0$, a contradiction. Therefore, $U(\mathfrak{g})(\ep,0,0,0)$ and $U(\mathfrak{g})(\ep,0,0,\d_0)$
are not isomorphic generalized Hopf-Ore extensions of $U(\mathfrak g)$.

Now let $H$ be a generalized Hopf-Ore extensions on $U(\mathfrak g)$.
By the discussion before, we may assume that $H=U(\mathfrak{g})(\chi_{\a},0,0,\d)$,
where $\a\in k$ and $\d$ is a $\tau_{\a}$-derivation of $U(\mathfrak g)$ with $\d(a)\in\mathfrak{g}$.
Hence $\d(a)=\eta a$ for some $\eta\in k$.
If $\a=0$ and $\eta=0$, then $\chi_{\a}=\chi_0=\ep$ and $\d=0$, and hence $H=U(\mathfrak{g})(\ep,0,0,0)$.
If $\a=0$ and $\eta\neq 0$, then $\d=\eta\d_0$, and so it follows from \coref{1.12}(a) that
$H$ is isomorphic to $U(\mathfrak{g})(\ep,0,0,\d_0)$ as a generalized Hopf-Ore extension of $U(\mathfrak g)$.
Finally, if $\a\neq 0$, then using \prref{1.11} with $\lambda=1$, $r=1$, $b=-\d(a)$ and $\Phi=\Phi_{\a}$,
one can check that $H$ is isomorphic to $U(\mathfrak{g})(\chi_1,0,0,0)$ as a generalized Hopf-Ore extension of $U(\mathfrak g)$.
\end{proof}

In case char$k=p>0$, $x,y\in \hat{\mathfrak{g}}$ by \leref{2.1}.
If $x=y=0$, then $H=U(\mathfrak{g})(\chi_{\a},0,0,\d)$, and $\d(a)\in\hat{\mathfrak g}$
as above.
Now assume that $x\neq 0$ and $y\neq 0$.
Then \equref{e10} becomes
\begin{equation*}
\D(\d(a))=\d(a)\ot 1+1\ot \d(a)-\a x\ot y.
\end{equation*}

If char$k=p=2$, then it follows from \leref{2.2}(b) that $\a=0$.
In general, if $\a=0$, then $\tau_0(a)=a$ and $\d(a)\in \hat{\mathfrak{g}}$.
Now assume that $\a\neq 0$. Then char$k=p>2$. By \leref{2.2}(a), we have that $x=\lambda y$
for some $\lambda\in k^{\times}$.
By \coref{1.12}(d), $H$ is isomorphic to a usual Hopf-Ore extension.
Thus, we have the following proposition.

\begin{proposition}\prlabel{2.6}
Assume that char$k=p>0$. Then up to isomorphism, there are four classes of generalized Hopf-Ore extensions of
$U(\mathfrak{g})$ as follows:

{\rm(a)} $U(\mathfrak{g})(\ep,0,0,0)$;\\
{\rm(b)} $U(\mathfrak{g})(\ep,0,0,\d_1)$, where $\d_1$ is a derivation of $U(\mathfrak{g})$
 with $0\neq\d_1(a)\in \hat{\mathfrak{g}}$;\\
{\rm(c)} $U(\mathfrak{g})(\chi_1,0,0,0)$;\\
{\rm(d)} $U(\mathfrak{g})(\ep,x,y,\d_2)$, where $0\neq x,y\in\hat{\mathfrak{g}}$ with $kx\neq ky$,
and $\d_2$ is a derivation of $U(\mathfrak{g})$ with $\d_2(a)\in \hat{\mathfrak{g}}$.

Moreover, $U(\mathfrak{g})(\ep,0,0,0)$, $U(\mathfrak{g})(\ep,0,0,\d_1)$, $U(\mathfrak{g})(\chi_1,0,0,0)$ and
$U(\mathfrak{g})(\ep,x,y,\d_2)$ are pairwise non-isomorphic generalized Hopf-Ore extensions on $U(\mathfrak{g})$.
\end{proposition}
\begin{proof}
Let $H$ be a generalized Hopf-Ore extension of $U(\mathfrak{g})$. By the discussion above,
we may assume $H=U(\mathfrak{g})(\chi_{\a},0,0,\d)$ for some $\a\in k$ and $\tau_{\a}$-derivation $\d$
with $\d(a)\in\hat{\mathfrak g}$,
or $H=U(\mathfrak{g})(\ep,x,y,\d_2)$ for some $0\neq x, y\in\hat{\mathfrak{g}}$ with $kx\neq ky$,
and derivation $\d_2$ with $\d(a)\in\hat{\mathfrak{g}}$.

Similarly to the proof of \prref{2.5}, one can show
that as generalized Hopf-Ore extensions of $U(\mathfrak{g})$,
$U(\mathfrak{g})(\ep,0,0,0)$, $U(\mathfrak{g})(\ep,0,0,\d_1)$ and $U(\mathfrak{g})(\chi_1,0,0,0)$ are pairwise non-isomorphic,
$U(\mathfrak{g})(\chi_1,0,0,0)$ and $U(\mathfrak{g})(\ep,x,y,\d_2)$ are not isomorphic,
and $U(\mathfrak{g})(\chi_{\a},0,0,\d)$ is isomorphic to one of
$U(\mathfrak{g})(\ep,0,0,0)$, $U(\mathfrak{g})(\ep,0,0,\d_1)$ and $U(\mathfrak{g})(\chi_1,0,0,0)$.
Now we consider $U(\mathfrak{g})(\ep,0,0,\d)$ and $U(\mathfrak{g})(\ep,x,y,\d_2)$,
where $\d=0$ or $\d=\d_1$.
If $U(\mathfrak{g})(\ep,0,0,\d)$ and $U(\mathfrak{g})(\ep,x,y,\d_2)$
are isomorphic as generalized Hopf-Ore extensions of $U(\mathfrak{g})$,
then by \prref{1.11}, there exists an element $b\in U(\mathfrak{g})$ such that
$\D b= b\ot 1+1\ot b-x\ot y$. By \leref{2.2}, we have that $p>2$ and $kx=ky$,
which contradicts $kx\neq ky$. Hence $U(\mathfrak{g})(\ep,x,y,\d_2)$ is neither isomorphic to $U(\mathfrak{g})(\ep,0,0,0)$
nor isomorphic to $U(\mathfrak{g})(\ep,0,0,\d_1)$
as a generalized Hopf-Ore extension of $U(\mathfrak{g})$.
This completes the proof.
\end{proof}

For two generalized Hopf-Ore extensions in \prref{2.6}(b) or (d),
one can use \prref{1.11} to determine when they are isomorphic.

\subsection{The case of dim($\mathfrak{g}$)=2}\selabel{2.3}
It is well known that up to isomorphism, there are two $2$-dimensional
Lie algebras over $k$: the abelian Lie algebra $\mathfrak{g}_1=ka\op kb$
and the non-abelian Lie algebra $\mathfrak{g}_2=ka\op kb$ with $[a,b]=a$.
Obviously, $U(\mathfrak{g}_1)$ and $U(\mathfrak{g}_2)$ are not isomorphic as (Hopf) algebras.
Consequently, a generalized Hopf-Ore extension of $U(\mathfrak{g}_1)$ is not isomorphic to
a generalized Hopf-Ore extension of $U(\mathfrak{g}_2)$ by \prref{1.11}.

For any $\a, \b\in k$, one can define a character $\chi_{\a,\b}: U(\mathfrak{g}_1)\ra k$
by $\chi_{\a,\b}(a)=\a$ and $\chi_{\a,\b}(b)=\b$. Moreover, any character of $U(\mathfrak{g}_1)$
has the form $\chi_{\a,\b}$ for some $\a, \b\in k$. The algebra
automorphism $\tau_{\a,\b}$ of $U(\mathfrak{g}_1)$ induced by $\chi_{\a,\b}$
is given by $\tau_{\a,\b}(a)=a+\a$ and $\tau_{\a,\b}(b)=b+\b$  (see \prref{2.3}).
Let $\chi_1=\chi_{0,1}$ and $\tau_1=\tau_{0,1}$.
Note that $\chi_{0,0}=\ep$ and $\tau_{0,0}={\rm id}$.

Similarly, for any $\a\in k$, one can define a character $\chi_{\a}: U(\mathfrak{g}_2)\ra k$
by $\chi_{\a}(a)=0$ and $\chi_{\a}(b)=\a$. Moreover, any character of $U(\mathfrak{g}_2)$
has the form $\chi_{\a}$ for some $\a\in k$. The algebra
automorphism $\tau_{\a}$ of $U(\mathfrak{g}_2)$ induced by $\chi_{\a}$
is given by $\tau_{\a}(a)=a$ and $\tau_{\a}(b)=b+\a$.
Note that $\chi_0=\ep$ and $\tau_0={\rm id}$.

Let $GL(\mathfrak{g}_i)$ be the group of all linear automorphisms of $\mathfrak{g}_i$,
$i=1,2$. Under the basis $\{a, b\}$ of $\mathfrak{g}_i$, $GL(\mathfrak{g}_i)$ is isomorphic to
$GL_2(k)$, the group of all invertible $2\times 2$-matrices over $k$. For any
$A=\left(\begin{array}{cc}
             \a_{11} & \a_{12} \\
             \a_{21} & \a_{22} \\
           \end{array}
         \right)\in GL_2(k)$, the corresponding linear automorphism $\phi_A\in GL(\mathfrak{g}_i)$
is determined by $\phi_A(a, b)=(a, b)A$.
Since $\mathfrak{g}_1$ is abelian, $\phi_A$ is a Lie algebra automorphism of $\mathfrak{g}_1$
for any $A\in GL_2(k)$. It is easy to check that $\phi_A$ is a Lie algebra automorphism of $\mathfrak{g}_2$
if and only if $\a_{21}=0$ and $\a_{22}=1$.
Let $G_2$ be the subgroup of $GL_2(k)$ consisting of all matrices of form
$\left(\begin{array}{cc}
\a&\b\\
0&1\\
\end{array}\right)$ with $\a\in k^{\times}$ and $\b\in k$.

If $\phi_A$ is a Lie algebra automorphism of $\mathfrak{g}_i$, then $\phi_A$
can be uniquely extended to a Hopf algebra automorphism of $U(\mathfrak{g}_i)$,
denoted by $\Phi_A$. Conversely, assume that char$k=0$ and $\Phi$ is
a Hopf algebra automorphism of $U(\mathfrak{g}_i)$. Then it follows from \leref{2.1}
that the restriction $\Phi|_{\mathfrak{g}_i}$ is a Lie algebra automorphism of
$\mathfrak{g}_i$. Thus, we have the following lemma.

\begin{lemma}\lelabel{2.7}
Under the hypotheses above, we have

{\rm(a)} The map $GL_2(k)\ra {\rm Aut_{Hopf}}(U(\mathfrak{g}_1))$, $A\mapsto\Phi_A$
is a group monomorphism.\\
{\rm(b)} The map $G_2\ra {\rm Aut_{Hopf}}(U(\mathfrak{g}_2))$, $A\mapsto\Phi_A$
is a group monomorphism.\\
{\rm(c)} If char$k=0$, then the two maps in (1) and (2) are group isomorphisms.
\end{lemma}

\begin{lemma}\lelabel{2.8}
In $U(\mathfrak{g}_2)$, we have

{\rm(a)} $[a^n, b]=na^n$ and $[b^n,a]=\sum_{i=0}^{n-1}(-1)^{n-i}\binom{n}{i}ab^i$, $\forall n\>1$;\\
{\rm(b)} if char$k=p>0$ then $[a^{p^n},b]=\left\{\begin{array}{ll}
a,&n=0\\
0,&n\>1\\
\end{array}\right.$ and $[b^{p^n}, a]=(-1)^{p^n}a$, $\forall n\>0$.
\end{lemma}

\begin{proof}
(a) follows by induction on $n$, and (b) follows from (a).
\end{proof}

Now we discuss the generalized Hopf-Ore extensions on $U(\mathfrak{g}_i)$
in three cases: char$k=0$, char$k>2$ and char$k=2$, respectively.

{\bf Case 1}: char$k=0$. In this case, we only consider the generalized Hopf-Ore extensions of $U(\mathfrak{g}_2)$.
The generalized Hopf-Ore extensions of $U(\mathfrak{g}_1)$ will be considered in \seref{2.4}.
Let $H$ be a generalized Hopf-Ore extension on $U(\mathfrak{g}_2)$.
Then by \prref{2.3}, we may assume that $H=U(\mathfrak{g}_2)(\chi_{\a}, x, y, \d)$,
where $\a\in k$, $x,y\in\mathfrak{g}_2$
and $\d$ is a $\tau_{\a}$-derivation of $U(\mathfrak{g}_2)$ such that \equref{e10} is satisfied for all $c\in\mathfrak{g}$.
If $x=y=0$ or $kx=ky\neq 0$, then by \coref{1.12}(d), $H\cong U(\mathfrak{g}_2)(\chi_{\a},0,0,\d)$
with $\d(\mathfrak{g}_2)\subseteq\mathfrak{g}_2$, a usual Hopf-Ore extension of $U(\mathfrak{g}_2)$.
In case that $x, y\in \mathfrak{g}$ are linearly independent, we have
$(x, y)=(a,b)A$ for some $A=\left(
               \begin{array}{cc}
                 \a_{11} & \a_{12} \\
                 \a_{21} & \a_{22} \\
               \end{array}
             \right)\in GL_2(k)$.
By \equref{e10}, we have
$$\begin{array}{rl}
  \D(\d(b))=&\d(b)\ot 1+1\ot \d(b)+[x,b]\ot y+x\otimes[y,b]-\chi_{\a}(b)x\ot y\\
 =&\d(b)\ot 1+1\ot \d(b)+(2-\a)\a_{11}\a_{12}a\otimes a-\a\a_{21}\a_{22}b\otimes b\\
 &+(1-\a)\a_{11}\a_{22}a\otimes b+(1-\a)\a_{12}\a_{21}b\otimes a.
\end{array}$$
By \leref{2.2}(a), $(1-\a)\a_{11}\a_{22}=(1-\a)\a_{12}\a_{21}$, and so $\a=1$.
Let $\lambda={\rm det}(A)^{-1}\in k^{\times}$ and
$b'=\frac{1}{2}\lambda\a_{11}\a_{12}a^2+\lambda\a_{12}\a_{21}ab+\frac{1}{2}\lambda\a_{21}\a_{22}b^2\in U(\mathfrak{g}_2)$.
Then a straightforward computation shows that
$\D(b')=b'\otimes 1+1\otimes b'+\lambda x\otimes y-a\otimes b$.
Let $\d'': U(\mathfrak{g}_2)\ra U(\mathfrak{g}_2)$ be defined by
$\d''(u)=\tau_1(u)b'-b'u$ for all $u\in U(\mathfrak{g}_2)$, and let $\d'=\lambda\d+\d''$.
Then it follows from \prref{1.11} that $U(\mathfrak{g}_2)(\chi_1, x, y, \d)$
is isomorphic, as a generalized Hopf-Ore extension of $U(\mathfrak{g}_2)$,
to $U(\mathfrak{g}_2)(\chi_1, a, b, \d')$.
Again by \equref{e10}, we have
$$\begin{array}{c}
 \D(\d'(a))=\d'(a)\ot 1+1\ot \d'(a)-a\otimes a,\
 \D(\d'(b))=\d'(b)\ot 1+1\ot \d'(b).
\end{array}$$
Hence $\d'(b)\in \mathfrak{g}_2$. By \leref{2.2}(a), $\d'(a)+\frac{1}{2}a^2\in \mathfrak{g}_2$.

\begin{proposition}\prlabel{2.10}
Assume that char$(k)=0$. Then up to isomorphism, there are two classes of generalized Hopf-Ore extensions of
$U(\mathfrak{g}_2)$ as follows:

{\rm(a)} $U(\mathfrak{g}_2)(\chi_{\a},0,0,\d_1)$, where $\a\in k$ and
  $\d_1$ is a $\tau_{\a}$-derivation of $U(\mathfrak{g}_2)$
  with $\d_1(\mathfrak{g}_2)\subseteq\mathfrak{g}_2$;\\
{\rm(b)} $U(\mathfrak{g}_2)(\chi_1,a,b,\d_2)$, where $\d_2$ is a $\tau_1$-derivation of $U(\mathfrak{g}_2)$
  with $\d_2(a)+\frac{1}{2}a^2\in\mathfrak{g}_2$ and $\d_2(b)\in\mathfrak{g}_2$.

Moreover, $U(\mathfrak{g}_2)(\chi_{\a},0,0,\d_1)$ and $U(\mathfrak{g}_2)(\chi_1,a,b,\d_2)$
are not isomorphic generalized Hopf-Ore extensions of $U(\mathfrak{g}_2)$.
Furthermore, $U(\mathfrak{g}_2)(\chi_{\a},0,0,\d_1)$
is isomorphic, as a Hopf algebra, to the enveloping algebras of some 3-dimensional Lie algebra.
\end{proposition}
\begin{proof}
The first claim follows from the discussion above, the second one
 follows from \prref{1.11}, and the last one is obvious.
\end{proof}

Using \prref{1.11}, it is easy to determine when two generalized Hopf-Ore extensions
in the same class of \prref{2.10} are isomorphic.

{\bf Case 2}: char$k=p>2$. Firstly, let $H$ be a generalized Hopf-Ore extension of $U(\mathfrak{g}_1)$.
By \prref{2.3}, we may assume that $H=U(\mathfrak{g}_1)(\chi_{\a,\b},x,y,\d)$,
where $\a, \b\in k$, $x, y\in\hat{\mathfrak{g}}_1$
and $\d$ is a $\tau_{\a,\b}$-derivation of $U(\mathfrak{g}_1)$ such that \equref{e10} is satisfied for all $c\in\mathfrak{g}$.

If $x=y=0$ or $kx=ky\neq 0$,
then by \coref{1.12}(d), $H$ is isomorphic to $U(\mathfrak{g}_1)(\chi_{\a,\b},0,0,\d)$ with
$\d(\mathfrak{g}_1)\subseteq\hat{\mathfrak{g}}_1$. If $(\a,\b)\neq(0,0)$, then one can choose a matrix
$A\in GL_2(k)$ such that $(\a,\b)=(0,1)A$. In this case, $\chi_{\a,\b}\Phi_{A^{-1}}=\chi_1$.
By \coref{1.12}(e) and \leref{2.7}(a),
$U(\mathfrak{g}_1)(\chi_{\a,\b},0,0,\d)\cong U(\mathfrak{g}_1)(\chi_1,0,0,\Phi_A\d\Phi_{A^{-1}})$.
Hence $H$ is isomorphic to $U(\mathfrak{g}_1)(\ep,0,0,\d)$
or $U(\mathfrak{g}_1)(\chi_1,0,0, \d)$ as a generalized Hopf-Ore extension of $U(\mathfrak{g}_1)$,
where $\d$ is a derivation or a $\tau_1$-derivation of $U(\mathfrak{g}_1)$
with $\d(\mathfrak{g}_1)\subseteq\hat{\mathfrak{g}}_1$. If
$x, y\in\hat{\mathfrak g}_1$ are linearly independent, then by \equref{e10} we have
\begin{eqnarray*}
&&\d(a)=\d(a)\ot 1+1\ot\d(a)-\a x\ot y,\\
&&\d(b)=\d(b)\ot 1+1\ot\d(b)-\b x\ot y.\\
\end{eqnarray*}
Thus, by \leref{2.2}(a), $\a=\b=0$,
and so $H=U(\mathfrak{g}_1)(\ep,x,y,\d)$ with $\d(\mathfrak{g}_1)\subseteq\hat{\mathfrak{g}}_1$.
Summarizing the discussion above, we have the following proposition.

\begin{proposition}\prlabel{2.12}
Assume that char$(k)=p>2$. Then up to isomorphism, there are three classes of generalized Hopf-Ore extensions of
$U(\mathfrak{g}_1)$ as follows:

{\rm(a)} $U(\mathfrak{g}_1)(\ep,0,0,\d_1)$, where $\d_1$ is a derivation of $U(\mathfrak{g}_1)$
  with $\d_1(\mathfrak{g}_1)\subseteq\hat{\mathfrak g}_1$;\\
{\rm(b)} $U(\mathfrak{g}_1)(\chi_1,0,0,\d_2)$, where $\d_2$ is a $\tau_1$-derivation of $U(\mathfrak{g}_1)$
  with $\d_2(\mathfrak{g}_1)\subseteq\hat{\mathfrak g}_1$;\\
{\rm(c)} $U(\mathfrak{g}_1)(\ep,x,y,\d_3)$, where $0\neq x, y\in\hat{\mathfrak g}_1$ with $kx\neq ky$,
  and $\d_3$ is a derivation of $U(\mathfrak{g}_1)$
  with $\d_3({\mathfrak g}_1)\subseteq\hat{\mathfrak g}_1$.

Moreover, as generalized Hopf-Ore extensions,
$U(\mathfrak{g}_1)(\ep,0,0,\d_1)$, $U(\mathfrak{g}_1)(\chi_1,0,0,\d_2)$
and $U(\mathfrak{g}_1)(\ep,x,y,\d_3)$ are pairwise non-isomorphic.
\end{proposition}
\begin{proof}
The first claim follows from the discussion above, and the second one follows from \prref{1.11}.
\end{proof}

Next, let $H$ be a generalized Hopf-Ore extension of $U(\mathfrak{g}_2)$.
Then similarly, we may assume that $H=U(\mathfrak{g}_2)(\chi_{\a}, x, y, \d)$,
where $\a\in k$, $x,y\in\hat{\mathfrak g}_2$
and $\d$ is a $\tau_{\a}$-derivation of $U(\mathfrak{g}_2)$
such that \equref{e10} is satisfied for all $c\in\mathfrak{g}$.
If $x=y=0$ or $kx=ky\neq 0$, then $H\cong U(\mathfrak{g}_2)(\chi_{\a},0,0,\d)$
with $\d({\mathfrak g}_2)\subseteq\hat{\mathfrak g}_2$, a usual Hopf-Ore extension of $U(\mathfrak{g}_2)$.
Now assume that $x, y\in\hat{\mathfrak g}_2$ are linearly independent. Then
$x=\sum_{i\>0}(\a_ia^{p^i}+\a'_ib^{p^i})$ and
$y=\sum_{i\>0}(\b_ia^{p^i}+\b'_ib^{p^i})$ for some almost all zero
elements $\a_i,\a'_i,\b_i,\b'_i\in k$. Let $\a'=-\sum_{i\>0}\a_i'$ and $\b'=-\sum_{i\>0}\b_i'$.
Then by \equref{e10} and \leref{2.8}, we have
\begin{eqnarray*}
  &&\D(\d(a))=\d(a)\ot 1+1\ot \d(a)+\a' a\ot y+\b'x\ot a, \\
 &&\D(\d(b))=\d(b)\ot 1+1\ot \d(b)+\a_0a\ot y+\b_0x\ot a-\a x\ot y.
\end{eqnarray*}
By \leref{2.2}(a), we have the following equations (*):
$$\begin{array}{ll}
\a'\b_i=\b'\a_i,\ \forall i\>1;& \a'\b'_i=\b'\a'_i,\ \forall i\>0;\\
(1-\a)\a_0\b_i=(1-\a)\b_0\a_i,\ \forall i\>1;& \a\a_i\b_j=\a\a_j\b_i,\ \forall j>i\>1;\\
(1-\a)\a_0\b'_i=(1-\a)\b_0\a'_i,\ \forall i\>0;& \a\a_i\b'_j=\a\a'_j\b_i,\ \forall i\>1, j\>0;\\
\a\a'_i\b'_j=\a\a'_j\b'_i,\ \forall j>i\>0.&\\
\end{array}$$
Suppose $\a\neq 1$. If $\a_0\neq 0$, then $\b_0=\g\a_0$ for some $\g\in k$.
Thus, from the equations above one gets that $\b_i=\g\a_i$, $\forall i\>1$ and $\b'_i=\g\a'_i$, $\forall i\>0$.
Hence $y=\g x$, a contradiction. This shows that $\a_0=0$. Then we have
$\b_0\a_i=\b_0\a'_i=0$, $\forall i\>0$, and so $\b_0=0$ since $x\neq 0$.
Furthermore, suppose $\a\neq 0$. Then from the equations above, one can see that
$x$ and $y$ are linearly dependent over $k$, a contradiction. Thus, we have proven that either $\a=0$ or $\a=1$,
and that $\a_0=\b_0=0$ when $\a=0$.

In case $\a=0$, $\a_0=\b_0=0$, and the equations (*) become
$$\begin{array}{ll}
\a'\b_i=\b'\a_i,\ \forall i\>1;& \a'\b'_i=\b'\a'_i,\ \forall i\>0.\\
\end{array}$$
Then a similar argument as above shows that $\a'=\b'=0$.
Hence $\d(a), \d(b)\in\hat{\mathfrak g}_2$.

In case $\a=1$, the equations (*) become
$$\begin{array}{lll}
\a'\b_i=\b'\a_i,\ \forall i\>1;& \a'\b'_i=\b'\a'_i,\ \forall i\>0;&\\
\a_i\b_j=\a_j\b_i,\ \forall j>i\>1;& \a_i\b'_j=\a'_j\b_i,\ \forall i\>1, j\>0;&
\a'_i\b'_j=\a'_j\b'_i,\ \forall j>i\>0.\\
\end{array}$$
The above equations are equivalent to that $x-\a_0a$ and $y-\b_0a$ are linearly dependent.
Moreover, either $x-\a_0a\neq 0$ or $y-\b_0a\neq 0$ since $x$ and $y$ are linearly independent.

If $x-\a_0a=0$, then $y-\b_0a\neq 0$, $\a'=0$, and $\a_0\neq 0$ by $x\neq 0$.
In this case, we may assume $x=a$ by \coref{1.12}(a), i.e., $\a_0=1$. Then by \leref{2.2}(a),
$\d(a)-\frac{1}{2}\b'a^2\in\hat{\mathfrak g}_2$ and $\d(b)-\frac{1}{2}\b_0a^2\in\hat{\mathfrak g}_2$.
Similarly, if $y-\b_0a=0$, then $x-\a_0a\neq 0$, $\b'=0$, $y=a$,
$\d(a)-\frac{1}{2}\a'a^2\in\hat{\mathfrak g}_2$ and $\d(b)-\frac{1}{2}\a_0a^2\in\hat{\mathfrak g}_2$.
Now suppose that $x-\a_0a\neq 0$ and $y-\b_0a\neq 0$. Then $y-\b_0a=\l(x-\a_0a)$ for some
$\l\in k^{\times}$. By \coref{1.12}(a), we may assume $\l=1$.
Then $y-\b_0a=x-\a_0a$ and $\b'=\a'$. Since $x$ and $y$ are linearly independent, $\b_0\neq\a_0$.
Hence $y=(\b_0-\a_0)a+x$, and so $(\b_0-\a_0)^{-1}y=a+(\b_0-\a_0)^{-1}x$.
Again by \coref{1.12}(a), we may assume $\b_0-\a_0=1$.
Then $y=a+x$, and so
\begin{eqnarray*}
  \D(\d(a))&=&\d(a)\ot 1+1\ot \d(a)+\a' a\ot a+\a'(a\ot x+x\ot a),\\
 \D(\d(b))&=&\d(b)\ot 1+1\ot \d(b)+\a_0a\ot a+\a_0(a\ot x+x\ot a)-x\ot x.\\
\end{eqnarray*}
Thus, by \leref{2.2}(a), we have $\d(a)-\frac{1}{2}\a'a^2-\a'ax\in\hat{\mathfrak g}_2$
and $\d(b)-\frac{1}{2}\a_0a^2-\a_0ax+\frac{1}{2}x^2\in\hat{\mathfrak g}_2$.

Summarizing the discussion above, we have the following proposition.

\begin{proposition}\prlabel{2.13}
Assume that char$(k)=p>2$. Then each generalized Hopf-Ore extensions of
$U(\mathfrak{g}_2)$ is isomorphic to one of the followings:

{\rm(a)} $U(\mathfrak{g}_2)(\chi_{\a},0,0,\d_1)$, where $\d_1$ is a $\tau_{\a}$-derivation of $U(\mathfrak{g}_2)$
  with $\d_1(\mathfrak{g}_2)\subseteq\hat{\mathfrak g}_2$;\\
{\rm(b)} $U(\mathfrak{g}_2)(\ep,x,y,\d_2)$, where $x=\sum_{i\>1}\a_ia^{p^i}+\sum_{i\>0}\a'_ib^{p^i}\neq 0$ and
$y=\sum_{i\>1}\b_ia^{p^i}+\sum_{i\>0}\b'_ib^{p^i}\neq 0$ for some almost all zero
elements $\a_i,\a'_i,\b_i,\b'_i\in k$ with $kx\neq ky$ and $\sum_{i\>0}\a_i'=\sum_{i\>0}\b_i'=0$,
$\d_2$ is a derivation of $U(\mathfrak{g}_2)$
  with $\d_2(\mathfrak{g}_2)\subseteq\hat{\mathfrak g}_2$;\\
{\rm(c)} $U(\mathfrak{g}_2)(\chi_1,x,y,\d_3)$, where $0\neq x, y\in\hat{\mathfrak g}_2$,
and $\d_3$ is a $\tau_1$-derivation of $U(\mathfrak{g}_2)$ such that one of the followings is satisfied:
\begin{enumerate}
  \item[(1)] $x=a$, $y=\sum_{i\>0}(\b_ia^{p^i}+\b'_ib^{p^i})\neq\b_0a$ for some almost all zero
elements $\b_i,\b'_i\in k$,
$\d_3(a)-\frac{1}{2}\b'a^2\in\hat{\mathfrak g}_2$, $\d_3(b)-\frac{1}{2}\b_0a^2\in\hat{\mathfrak g}_2$,
where $\b'=-\sum_{i\>0}\b'_i$;
  \item[(2)] $y=a$, $x=\sum_{i\>0}(\a_ia^{p^i}+\a'_ib^{p^i})\neq\a_0a$
for some almost all zero elements $\a_i,\a'_i\in k$,
$\d_3(a)-\frac{1}{2}\a'a^2\in\hat{\mathfrak g}_2$, $\d_3(b)-\frac{1}{2}\a_0a^2\in\hat{\mathfrak g}_2$,
where $\a'=-\sum_{i\>0}\a_i'$;
  \item[(3)] $x=\sum_{i\>0}(\a_ia^{p^i}+\a'_ib^{p^i})\neq \a_0 a$ for some almost all zero elements
   $\a_i,\a'_i\in k$, $y=a+x$, $\d_3(a)-\frac{1}{2}\a'a^2-\a'ax\in\hat{\mathfrak g}_2$
and $\d_3(b)-\frac{1}{2}\a_0a^2-\a_0ax+\frac{1}{2}x^2\in\hat{\mathfrak g}_2$,
where $\a'=-\sum_{i\>0}\a'_i$.
         \end{enumerate}
Moreover, as generalized Hopf-Ore extensions of $U(\mathfrak{g}_2)$,
$U(\mathfrak{g}_2)(\chi_{\a},0,0,\d_1)$, $U(\mathfrak{g}_2)(\ep,x,y,\d_2)$
and $U(\mathfrak{g}_2)(\chi_1,x,y,\d_3)$ are pairwise non-isomorphic.
\end{proposition}
\begin{proof}
The first statement follows from the discussion above, and the second one follows from \prref{1.11}.
\end{proof}

{\bf Case 3}: char$k=2$. Firstly, let $H$ be a generalized Hopf-Ore extension on $U(\mathfrak{g}_1)$.
By \prref{2.3}, we may assume that $H=U(\mathfrak{g}_1)(\chi_{\a,\b},x,y,\d)$,
where $\a, \b\in k$, $x, y\in\hat{\mathfrak{g}}_1$
and $\d$ is a $\tau_{\a,\b}$-derivation of $U(\mathfrak{g}_1)$ such that \equref{e10} is satisfied for all $c\in\mathfrak{g}_1$.
If $x=y=0$, then an argument similar to Case 2 shows that $H$ is isomorphic to $U(\mathfrak{g}_1)(\ep, 0, 0, \d)$
or $U(\mathfrak{g}_1)(\chi_1, 0, 0, \d)$, where $\d$ is a derivation or $\tau_1$-derivation of $U(\mathfrak{g}_1)$
with $\d(\mathfrak{g}_1)\subseteq\hat{\mathfrak g}_1$.
If $x\neq 0$ and $y\neq 0$, then by \equref{e10}, we have
\begin{eqnarray*}
  &&\D(\d(a))=\d(a)\ot 1+1\ot \d(a)-\a x\ot y, \\
 &&\D(\d(b))=\d(b)\ot 1+1\ot \d(b)-\b x\ot y.
\end{eqnarray*}
By \leref{2.2}(b), $\a=\b=0$. Hence $\d(a), \d(b)\in\hat{\mathfrak g}_1$.
Thus, we have the following proposition.

\begin{proposition}\prlabel{2.14}
Assume that char$(k)=2$. Then each generalized Hopf-Ore extensions on
$U(\mathfrak{g}_1)$ is isomorphic to one of the followings:

{\rm(a)} $U(\mathfrak{g}_1)(\ep,0,0,\d_1)$, where $\d_1$ is a derivation of $U(\mathfrak{g}_1)$
  with $\d_1(\mathfrak{g}_1)\subseteq\hat{\mathfrak g}_1$;\\
{\rm(b)} $U(\mathfrak{g}_1)(\chi_1,0,0,\d_2)$, where $\d_2$ is a $\tau_1$-derivation of $U(\mathfrak{g}_1)$
  with $\d_2(\mathfrak{g}_1)\subseteq\hat{\mathfrak g}_1$;\\
{\rm(c)} $U(\mathfrak{g}_1)(\ep,x,y,\d_3)$, where $0\neq x, y\in\hat{\mathfrak g}_1$,
  and $\d_3$ is a derivation of $U(\mathfrak{g}_1)$
  with $\d_3({\mathfrak g}_1)\subseteq\hat{\mathfrak g}_1$.

Moreover, $U(\mathfrak{g}_1)(\ep,0,0,\d_1)$, $U(\mathfrak{g}_1)(\chi_1,0,0,\d_2)$
and $U(\mathfrak{g}_1)(\ep,x,y,\d_3)$ are pairwise non-isomorphic
generalized Hopf-Ore extensions on $U(\mathfrak{g}_1)$.
\end{proposition}

\begin{proof}
The first statement follows from the discussion above, and the second one follows from \prref{1.11}.
\end{proof}

Next, let $H$ be a generalized Hopf-Ore extension on $U(\mathfrak{g}_2)$.
Then similarly, we may assume that $H=U(\mathfrak{g}_2)(\chi_{\a}, x, y, \d)$,
where $\a\in k$, $x,y\in\hat{\mathfrak g}_2$
and $\d$ is a $\tau_{\a}$-derivation of $U(\mathfrak{g}_2)$ such that \equref{e10} is satisfied for all $c\in\mathfrak{g}$.
If $x=y=0$, then $H\cong U(\mathfrak{g}_2)(\chi_{\a},0,0,\d)$
with $\d({\mathfrak g}_2)\subseteq\hat{\mathfrak g}_2$, a usual Hopf-Ore extension on $U(\mathfrak{g}_2)$.
Now assume $x\neq0$ and $y\neq 0$. Then
$x=\sum_{i\>0}(\a_ia^{2^i}+\a'_ib^{2^i})$ and
$y=\sum_{i\>0}(\b_ia^{2^i}+\b'_ib^{2^i})$ for some almost all zero
elements $\a_i,\a'_i,\b_i,\b'_i\in k$. Let $\a'=\sum_{i\>0}\a_i'$ and $\b'=\sum_{i\>0}\b_i'$.
Then by \equref{e10} and \leref{2.8}, we have
\begin{eqnarray}
  &&\D(\d(a))=\d(a)\ot 1+1\ot \d(a)+\a' a\ot y+\b'x\ot a, \eqlabel{daeq}\\
 &&\D(\d(b))=\d(b)\ot 1+1\ot \d(b)+\a_0a\ot y+\b_0x\ot a-\a x\ot y. \eqlabel{dbeq}
\end{eqnarray}
By \leref{2.2}(b), we have the following equations (**):
$$\begin{array}{ll}
\a'\b_i=\b'\a_i,\ \forall i\>0;& \a'\b'_i=\b'\a'_i,\ \forall i\>0;\\
(1-\a)\a_0\b_i=(1-\a)\b_0\a_i,\ \forall i\>1;& \a\a_i\b_j=\a\a_j\b_i,\ \forall j>i\>1;\\
(1-\a)\a_0\b'_i=(1-\a)\a'_i\b_0,\ \forall i\>0;& \a\a_i\b'_j=\a\a'_j\b_i,\ \forall i\>1, j\>0;\\
\a\a'_i\b'_i=\a\a_i\b_i=0,\ \forall i\>0;& \a\a'_i\b'_j=\a\a'_j\b'_i,\ \forall j>i\>0.\\
\end{array}$$

Suppose $\a\neq 0$ and $\a\neq 1$. If $\a_0\neq 0$, then $\b_0=0$ by $\a\a_0\b_0=0$.
Then from $(1-\a)\a_0\b_i=(1-\a)\b_0\a_i$, $\forall i\>1$ and
$(1-\a)\a_0\b'_i=(1-\a)\a'_i\b_0$, $\forall i\>0$, one gets $\b_i=0$, $\forall i\>1$
and $\b'_i=0$, $\forall i\>0$.
Hence $y=0$, a contradiction. Thus, $\a_0=0$. Similarly, $\b_0=0$.
Since $x\neq0$, there exists an integer $i_0\>1$ or $i_0\>0$ such that $\a_{i_0}\neq 0$
or $\a'_{i_0}\neq 0$. If $\a_{i_0}\neq 0$ for some $i_0\>1$,
then $\b_{i_0}=0$ by $\a\a_{i_0}\b_{i_0}=0$.
Then from $\a\a_i\b_j=\a\a_j\b_i$, $\forall j>i\>1$ and
$\a\a_i\b'_j=\a\a'_j\b_i$, $\forall i\>1, j\>0$,
one gets $\b_i=0$, $\forall i_0\neq i\>1$
and $\b'_i=0$, $\forall i\>0$. Hence $y=0$, a contradiction.
Similarly, if $\a'_{i_0}\neq 0$ for some $i_0\>0$, then $y=0$, a contradiction.
Thus, we have proven that either $\a=0$ or $\a=1$.

 In case $\a=0$, the equations (**) become
$$\begin{array}{ll}
\a'\b_i=\b'\a_i,\ \forall i\>1;& \a'\b'_i=\b'\a'_i,\ \forall i\>0,\\
\a_0\b_i=\b_0\a_i,\ \forall i\>1;&\a_0\b'_i=\a'_i\b_0,\ \forall i\>0.\\
\end{array}$$
If $kx=ky$, then $y=\g x$ for some $\g\in k^{\times}$.
In this case, we may assume that $y=x$ by \coref{1.12}(a).
Moreover, $\d(a)-\a' ax\in\hat{\mathfrak g}_2$ and
$\d(b)-\a_0ax\in\hat{\mathfrak g}_2$.
Now assume that $kx\neq ky$.
If $\a_0\neq 0$, then $\b_0=\g\a_0$ for some $\g\in k$.
Then from $\a_0\b_i=\b_0\a_i$, $\forall i\>1$ and $\a_0\b'_i=\a'_i\b_0$, $\forall i\>0$,
one gets $\b_i=\g\a_i$, $\forall i\>1$ and $\b'_i=\g\a'_i$, $\forall i\>0$.
This implies $y=\g x$. By $x, y\neq 0$, $\g\neq 0$ and $kx=ky$, a contradiction.
Hence $\a_0=0$. Similarly, we have $\b_0=0$.
Then from $\a'\b_i=\b'\a_i$, $\forall i\>1$ and $\a'\b'_i=\b'\a'_i$, $\forall i\>0$,
a similar argument as above shows that $\a'=\b'=0$.
In this case, $\d(a)\in\hat{\mathfrak g}_2$ and
$\d(b)\in\hat{\mathfrak g}_2$.

In case $\a=1$, the equations (**) become
$$\begin{array}{ll}
\a'\b_i=\b'\a_i,\ \forall i\>0;& \a'\b'_i=\b'\a'_i,\ \forall i\>0;\\
\a_i\b_j=\a_j\b_i,\ \forall j>i\>1;& \a_i\b'_j=\a'_j\b_i,\ \forall i\>1, j\>0;\\
\a'_i\b'_i=\a_i\b_i=0,\ \forall i\>0;& \a'_i\b'_j=\a'_j\b'_i,\ \forall j>i\>0.\\
\end{array}$$
These equations implies that
$x-\a_0a$ and $y-\b_0a$ are linearly dependent.
If $x-\a_0a=y-\b_0a=0$, then $x=\a_0a$ and $y=\b_0a$.
Since $\a_0\b_0=0$, $\a_0=0$ or $\b_0=0$, and hence $x=0$ or $y=0$, a contradiction.
It follows that either $x-\a_0a\neq 0$ or $y-\b_0a\neq0$.

If $x-\a_0a\neq 0$, then $y-\b_0a=\g(x-\a_0a)$ for some $\g\in k$,
and there is an integer $i_0\>1$ (or $i_0\>0$) such that $\a_{i_0}\neq0$ (or $\a'_{i_0}\neq0$).
Hence $\b_{i_0}=0$ (or $\b'_{i_0}=0$) by $\a_{i_0}\b_{i_0}=0$ (or $\a'_{i_0}\b'_{i_0}=0$).
However, $\b_{i_0}=\g\a_{i_0}$ (or $\b'_{i_0}=\g\a'_{i_0}$), hence $\g=0$. Thus, $y=\b_0a$ and $\b_0\neq 0$,
which implies $\a_0=0$ by $\a_0\b_0=0$. In this case, we may assume $y=a$ by \coref{1.12}(a),
i.e., $\b_0=1$. Hence $\b'=0$, and $\a'=0$ by $\a'\b_0=\b'\a_0$.
Then by \leref{2.2}(b), $\d(a), \d(b)\in\hat{\mathfrak g}_2$.

Similarly, if $y-\b_0a\neq 0$, then $x-\a_0a=0$. We may assume $x=a$. Moreover, we have
$\a'=\b_0=\b'=0$ and $\d(a), \d(b)\in\hat{\mathfrak g}_2$.
In this case, $H\cong U(\mathfrak{g}_2)(\chi_1, a, y, \d)$.
We claim that $U(\mathfrak{g}_2)(\chi_1, a, y, \d)\cong U(\mathfrak{g}_2)(\chi_1, y, a, \d)$
as generalized Hopf-Ore extensions on $U(\mathfrak{g}_2)$.
In fact, let $b'=ay$. Then
$\D(b')=\D(a)\D(y)=(a\ot 1+1\ot a)(y\ot 1+1\ot y)=ay\ot 1+1\ot ay+a\ot y+y\ot a
=b'\ot 1+1\ot b'+a\ot y-y\ot a$.
Define a map $\d'': U(\mathfrak{g}_2)\ra U(\mathfrak{g}_2)$ by
$\d''(u)=\tau_1(u)b'-b'u$ for any $u\in U(\mathfrak{g}_2)$.
Then $\d''$ is a $\tau_1$-derivation of $U(\mathfrak{g}_2)$.
By \leref{2.8}(b), a straightforward computation shows that
$\d''(a)=\d''(b)=0$. Hence $\d''=0$, and so $\d+\d''=\d$.
Thus, it follows from \prref{1.11} that $U(\mathfrak{g}_2)(\chi_1, a, y, \d)$
and $U(\mathfrak{g}_2)(\chi_1, y, a, \d)$ are isomorphic
generalized Hopf-Ore extensions on $U(\mathfrak{g}_2)$.

Summarizing the discussion above, we have the following proposition.

\begin{proposition}\prlabel{2.15}
Assume char$(k)=2$. Then each generalized Hopf-Ore extension on $U(\mathfrak{g}_2)$
is isomorphic to one in the followings:

{\rm (a)} $U(\mathfrak{g}_2)(\chi_{\a},0,0,\d_1)$, where $\a\in k$ and $\d_1$ is a $\tau_{\a}$-derivation
 of $U(\mathfrak{g}_2)$ with $\d_1(\mathfrak{g}_2)\subseteq\hat{\mathfrak{g}}_2$.\\
{\rm (b)} $U(\mathfrak{g}_2)(\ep,x,x,\d_2)$, where $x=\sum_{i\>0}(\a_ia^{2^i}+\a'_ib^{2^i})\neq 0$
for some almost all zero elements $\a_i, \a'_i\in k$, $\a'=\sum_{i\>0}\a'_i$,
and $\d_2$ is a derivation of $U(\mathfrak{g}_2)$ with
$\d_2(a)-\a'ax\in\hat{\mathfrak{g}}_2$ and $\d_2(b)-\a_0ax\in\hat{\mathfrak{g}}_2$.\\
{\rm (c)} $U(\mathfrak{g}_2)(\ep,x,y,\d_3)$, where $x=\sum_{i\>1}\a_ia^{2^i}+\sum_{i\>0}\a'_ib^{2^i}\neq 0$,
$y=\sum_{i\>1}\b_ia^{2^i}+\sum_{i\>0}\b'_ib^{2^i}\neq 0$ for some almost all zero elements
$\a_i,\a'_i,\b_i,\b'_i\in k$ with $kx\neq ky$ and $\sum_{i\>0}\a_i'=\sum_{i\>0}\b_i'=0$, and
$\d_3$ is a derivation of $U(\mathfrak{g}_2)$ with $\d_3(\mathfrak{g}_2)\subseteq\hat{\mathfrak g}_2$.\\
{\rm (d)} $U(\mathfrak{g}_2)(\chi_1,x,a,\d_4)$, where $x=\sum_{i\>1}\a_ia^{2^i}+\sum_{i\>0}\a'_ib^{2^i}\neq 0$
for some almost all zero elements
$\a_i,\a'_i\in k$ with $\sum_{i\>0}\a_i'=0$, and
$\d_4$ is a $\tau_1$-derivation of $U(\mathfrak{g}_2)$ with $\d_4(\mathfrak{g}_2)\subseteq\hat{\mathfrak g}_2$.

Moreover, $U(\mathfrak{g}_2)(\chi_{\a},0,0,\d_1)$, $U(\mathfrak{g}_2)(\ep,x,x,\d_2)$,
$U(\mathfrak{g}_2)(\ep,x,y,\d_3)$ and $U(\mathfrak{g}_2)(\chi_1,x,a,\d_4)$
are pairwise non-isomorphic generalized Hopf-Ore extensions on $U(\mathfrak{g}_2)$.
\end{proposition}

\begin{proof}
The first statement follows from the discussion above, and the second one follows from \prref{1.11}.
\end{proof}

\subsection{The case of dim($\mathfrak{g}$)=$n$ with $n\>$2 and char($k$)=0}\selabel{2.4}
Throughout this subsection, assume that char$(k)=0$ and $\mathfrak{g}$ is an $n$-dimensional
abelian Lie algebra. Let $\{a_1, a_2, \cdots, a_n\}$ be a fixed basis of $\mathfrak g$ over $k$.

For any $\a=(\a_1, \a_2, \cdots, \a_n)\in k^n$, one can define a character $\chi_{\a}: U(\mathfrak{g})\ra k$
by $\chi_{\a}(a_i)=\a_i$, $1\<i\<n$. Moreover, any character of $U(\mathfrak{g})$
is equal to some  $\chi_{\a}$, $\a\in k^n$. The algebra
automorphism $\tau_{\a}$ of $U(\mathfrak{g})$ induced by $\chi_{\a}$
is given by $\tau_{\a}(a_i)=a_i+\a_i$, $1\<i\<n$ (see \prref{2.3}).
Note that $\chi_{0}=\ep$ and $\tau_{0}={\rm id}$.
Let ${\bf 1}=(1, 0, \cdots, 0)\in k^n$.

Let $M_n(k)$ be the algebra of all $n\times n$-matrices over $k$,
and $GL_n(k)$ the group of all invertible matrices in $M_n(k)$. For any $A=(\a_{ij})\in M_n(k)$,
one can define a linear endomorphism $\phi_A\in{\rm End}_k(\mathfrak g)$ by
$\phi_A(a_i)=\sum_{j=1}^n\a_{ji}a_j$, $1\<i\<n$. Moreover, the map
$\phi: M_n(k)\ra{\rm End}_k(\mathfrak g)$, $A\mapsto\phi_A$, is an algebra isomorphism.
Let $GL(\mathfrak{g})$ be the group of all linear automorphisms of $\mathfrak{g}$.
Then $A\in GL_n(k)$ if and only if $\phi_A\in GL(\mathfrak{g})$.
Since $\mathfrak{g}$ is abelian, $\phi_A$ is a Lie algebra endomorphism (or automorphism) of $\mathfrak{g}$
for any $A\in M_n(k)$ (or $GL_n(k)$). Therefore $\phi_A$
can be uniquely extended to a Hopf algebra endomorphism (or automorphism) $\Phi_A$ of $U(\mathfrak{g})$.
Conversely, since char$k=0$, any Hopf algebra endomorphism (or automorphism)
of $U(\mathfrak{g})$ is equal to some $\Phi_A$, $A\in M_n(k)$ (or $A\in GL_n(k)$).
Thus, we have the following lemma.

\begin{lemma}\lelabel{2.16}
Under the hypotheses above, we have

{\rm (a)} The map $\phi: M_n(k)\ra {\rm End}_k(\mathfrak{g})$, $A\mapsto\phi_A$
is an algebra isomorphism.\\
{\rm (b)} The map $\Phi: GL_n(k)\ra {\rm Aut_{Hopf}}(U(\mathfrak{g}))$, $A\mapsto\Phi_A$
is a group isomorphism.
\end{lemma}

Let $\a\in k^n$. Since $U(\mathfrak g)$ is generated as an algebra by $\mathfrak g$,
a $\tau_{\a}$-derivation of $U(\mathfrak g)$ is determined by its value on $\mathfrak g$.
Suppose that $\d$ is a $\tau_{\a}$-derivation of $U(\mathfrak g)$.
Since $\mathfrak g$ is abelian, $\d(a_ia_j)=\d(a_ja_i)$ for all $1\<i,j\<n$, which means that
$\a_i\d(a_j)=\a_j\d(a_i)$, $1\<i,j\<n$. If $\d(\mathfrak g)\subseteq\mathfrak g$,
then the restriction $\d|_{\mathfrak g}$ can be regarded as a linear endomorphism
of $\mathfrak g$, i.e., $\d|_{\mathfrak g}\in{\rm End}(\mathfrak g)$.
By \leref{2.16}(a), there is a matrix $A=(\a_{ij})\in M_n(k)$ such that $\d|_{\mathfrak g}=\phi_A$.
In this case, $\a_i\d(a_j)=\a_j\d(a_i)$ $1\<i,j\<n$, if and only if
\begin{equation}\eqlabel{deri}
\a_i\a_{lj}=\a_j\a_{li},\ 1\<i,j, l\<n.
\end{equation}
Conversely, if a matrix $A=(\a_{ij})\in M_n(k)$ satisfies \equref{deri}, then
$\phi_A$ can be uniquely extended to a $\tau_{\a}$-derivation of $U(\mathfrak g)$, denoted by $\d_{\a, A}$.
In particular, if $\a=0$, $\d_{0, A}$ is a derivation of $U(\mathfrak g)$
for any $A\in M_n(k)$.

Now let $H$ be a generalized Hopf-Ore extension on $U(\mathfrak{g})$.
By \prref{2.3}, we may assume that $H=U(\mathfrak{g})(\chi_{\a},x,y,\d)$,
where $\a=(\a_1, \a_2, \cdots, \a_n)\in k^n$, $x, y\in\mathfrak{g}$
and $\d$ is a $\tau_{\a}$-derivation of $U(\mathfrak{g})$ such that \equref{e10} is satisfied for all $c\in\mathfrak{g}$.

Firstly, assume that $x=y=0$ or $kx=ky\neq 0$.
Then by \coref{1.12}(d), $H\cong U(\mathfrak{g})(\chi_{\a},0,0,\d)$,
a usual Hopf-Ore extension over $U(\mathfrak{g})$.
By \leref{2.1} and \equref{e10}, one knows that $\d(\mathfrak{g})\subseteq\mathfrak{g}$.
If $\a\neq 0$, then we may choose a matrix $P\in GL_n(k)$
such that $\a={\bf 1}P$. In this case, $\chi_{\a}=\chi_{\bf 1}\Phi_P$,
and it follows from \coref{1.12}(e) and \leref{2.7} that $H$ is isomorphic to
$U(\mathfrak{g})(\chi_{\bf 1},0,0,\Phi_P\d\Phi_{P^{-1}})$. Obviously,
$\Phi_P\d\Phi_{P^{-1}}(\mathfrak g)\subseteq\mathfrak g$, and hence $\Phi_P\d\Phi_{P^{-1}}=\d_{{\bf 1}, A}$
for some $A=(\a_{ij})\in M_n(k)$. Since $\d_{{\bf 1}, A}$ is a $\tau_{\bf 1}$-derivation, it follows from
\equref{deri} that $\a_{lj}=0$ for all $1\<l\<n$ and $2\<j\<n$.
Therefore, $\d_{{\bf 1},A}(a_1)=\sum_{j=1}^n\a_{j1}a_j$ and $\d_{{\bf 1},A}(a_i)=0$ for all $2\<i\<n$.
Let $b=-\d_{{\bf 1},A}(a_1)$. Then $\D(b)=b\ot 1+1\ot b$.
Define $\d'': U(\mathfrak g)\ra U(\mathfrak g)$ by $\d''(u)=\tau_{\bf 1}(u)b-bu$,
$u\in U(\mathfrak g)$. Then $\d''(a_1)=\tau_{\bf 1}(a_1)b-ba_1=(a_1+1)b-ba_1=b=-\d_{{\bf 1},A}(a_1)$
and $\d''(a_i)=\tau_{\bf 1}(a_i)b-ba_i=a_ib-ba_i=0$ for all $2\<i\<n$.
Hence $\d''=-\d_{{\bf 1},A}$, and so
$U(\mathfrak{g})(\chi_{\bf 1},0,0, \d_{{\bf 1},A})\cong U(\mathfrak{g})(\chi_{\bf 1},0,0, 0)$
by \prref{1.11}.
Let $A, B\in M_n(k)$. By \prref{1.11} and \leref{2.16}(b), one can check that
$U(\mathfrak{g})(\ep,0,0,\d_{0,A})\cong U(\mathfrak{g})(\ep,0,0,\d_{0,B})$
as generalized Hopf-Ore extensions if and only if $B=\l PAP^{-1}$ for some $\l\in k^{\times}$ and $P\in GL_n(k)$.

Now assume that $x, y\in \mathfrak{g}$ are linearly independent.
Then there is an invertible matrix $A$ in $GL_n(k)$ such that $\Phi_A(x)=a_1$ and $\Phi_A(y)=a_2$.
Since $\chi_{\a}\Phi_{A^{-1}}=\chi_{\a A^{-1}}$, we have generalized Hopf-Ore extension isomorphism
$U(\mathfrak{g})(\chi_{\a},x,y,\d)\cong U(\mathfrak{g})(\chi_{\a A^{-1}},a_1,a_2,\Phi_A\d\Phi_{A^{-1}})$
by \coref{1.12}(e). Thus, we may assume that $H=U(\mathfrak{g})(\chi_{\a},a_1,a_2,\d)$.
Then by \equref{e10}, we have
$$\D(\d(a_i))=\d(a_i)\ot 1+1\ot \d(a_i)-\a_i a_1\ot a_2, \ 1\<i\<n.$$
By \leref{2.2}(a), $\a_i=0$, $1\<i\<n$. Hence $\a=0$ and $\d(\mathfrak{g})\subseteq \mathfrak{g}$.
It follows that $\d=\d_{0,A}$ for some $A\in M_n(k)$.

\begin{lemma}\lelabel{iso}
Let $A=(\a_{ij}), B=(\b_{ij})\in M_n(k)$. Then $U(\mathfrak{g})(\ep,a_1,a_2,\d_{0,A})$ is isomorphic to
$U(\mathfrak{g})(\ep,a_1,a_2,\d_{0,B})$ as a generalized Hopf-Ore extension of $U(\mathfrak{g})$
if and only if there is a matrix $P=(p_{ij})\in GL_n(k)$ with
$p_{i1}=p_{i2}=0$ for all $2<i\<n$ such that $B=(p_{11}p_{22}-p_{12}p_{21})^{-1}PAP^{-1}$.
\end{lemma}

\begin{proof}
Suppose $U(\mathfrak{g})(\ep,a_1,a_2,\d_{0,A})\cong U(\mathfrak{g})(\ep,a_1,a_2,\d_{0,B})$
as generalized Hopf-Ore extensions of $U(\mathfrak{g})$. Then by \prref{1.11} and \leref{2.16},
there is an element $b\in U(\mathfrak g)$, a scale $\l\in k^{\times}$ and a matrix
$P=(p_{ij})\in GL_n(k)$ such that $\D(b)=b\ot 1+1\ot b+\l\Phi_P(a_1)\ot\Phi_P(a_2)-a_1\ot a_2$
and $\d_{0, B}=\l\Phi_P\d_{0,A}\Phi_{P^{-1}}$. Hence $B=\l PAP^{-1}$ and
$$\begin{array}{rl}
\D(b)=&b\ot 1+1\ot b+\l\sum_{i,j=1}^np_{i1}p_{j2}a_i\ot a_j-a_1\ot a_2\\
=&b\ot 1+1\ot b+\l\sum_{i=1}^np_{i1}p_{i2}a_i\ot a_i\\
&+(\l p_{11}p_{22}-1)a_1\ot a_2+\l p_{21}p_{12}a_2\ot a_1\\
&+\sum_{3\<i\<n}\l(p_{11}p_{i2}a_1\ot a_i+p_{i1}p_{12}a_i\ot a_1)\\
&+\sum_{2\<i<j\<n}\l(p_{i1}p_{j2}a_i\ot a_j+p_{j1}p_{i2}a_j\ot a_i).\\
\end{array}$$
Then by \leref{2.2}(a), we have $\l(p_{11}p_{22}-p_{12}p_{21})=1$,
$p_{11}p_{i2}=p_{i1}p_{12}$, $3\<i\<n$, and $p_{i1}p_{j2}=p_{j1}p_{i2}$,
$2\<i<j\<n$. It follows that $p_{i1}=p_{i2}=0$ for all $3\<i\<n$ and $\l=(p_{11}p_{22}-p_{12}p_{21})^{-1}$.

Conversely, suppose that there is a matrix $P=(p_{ij})\in GL_n(k)$ with
$p_{i1}=p_{i2}=0$ for all $2<i\<n$ such that $B=(p_{11}p_{22}-p_{12}p_{21})^{-1}PAP^{-1}$.
Then by det$(P)\neq 0$ and $p_{i1}=p_{i2}=0$ for all $2<i\<n$, one gets $p_{11}p_{22}-p_{12}p_{21}\neq 0$.
Let $\l=(p_{11}p_{22}-p_{12}p_{21})^{-1}$ and $b=\frac{1}{2}\l\sum_{i=1}^2 p_{i1}p_{i2}a_i^2+\l p_{12}p_{21}a_1a_2$.
Then $\l p_{11}p_{22}-1=\l p_{12}p_{21}$, $\d_{0, B}=\l\Phi_P\d_{0,A}\Phi_{P^{-1}}$ and
$$\begin{array}{rl}
\D(b)=&\frac{1}{2}\l\sum_{i=1}^2p_{i1}p_{i2}(a_i\ot 1+1\ot a_i)^2\\
&+\l p_{12}p_{21}(a_1\ot 1+1\ot a_1)(a_2\ot 1+1\ot a_2)\\
=&\frac{1}{2}\l\sum_{i=1}^2p_{i1}p_{i2}a_i^2\ot 1+1\ot \frac{1}{2}\l\sum_{i=1}^2p_{i1}p_{i2}a_i^2+\l\sum_{i=1}^2p_{i1}p_{i2}a_i\ot a_i\\
&+\l p_{12}p_{21}a_1a_2\ot 1+\l p_{12}p_{21}1\ot a_1a_2+\l p_{12}p_{21}a_1\ot a_2+\l p_{12}p_{21}a_2\ot a_1\\
=&b\ot 1+1\ot b+\l\sum_{i=1}^2p_{i1}p_{i2}a_i\ot a_i\\
&+(\l p_{11}p_{22}-1)a_1\ot a_2+\l p_{21}p_{12}a_2\ot a_1\\
=&b\ot 1+1\ot b+\l\sum_{i,j=1}^2p_{i1}p_{j2}a_i\ot a_j-a_1\ot a_2\\
=&b\ot 1+1\ot b+\l\Phi_P(a_1)\ot\Phi_P(a_2)-a_1\ot a_2.\\
\end{array}$$
It follows from \prref{1.11} that
$U(\mathfrak{g})(\ep,a_1,a_2,\d_{0,A})\cong U(\mathfrak{g})(\ep,a_1,a_2,\d_{0,B})$
as generalized Hopf-Ore extensions of $U(\mathfrak{g})$.
\end{proof}

Define a relation $\sim$ on $M_n(k)$ as follows: for $A, B\in M_n(k)$,
$A\sim B$ if and only if there is a scalar $\l\in k^{\ti}$ and a matrix $P\in GL_n(k)$
such that $B=\l PAP^{-1}$. Then $\sim$ is an equivalence relation on $M_n(k)$.
Let $M_n(k)/\sim$ be the corresponding set of equivalence classes.
Similarly, define a relation $\sim'$ on $M_n(k)$ as follows:
for $A, B\in M_n(k)$,
$A\sim' B$ if and only if there is a matrix $P=(p_{ij})\in GL_n(k)$ with $p_{i1}=p_{i2}=0$
for all $3\<i\<n$ such that $B=(p_{11}p_{22}-p_{12}p_{21})^{-1}PAP^{-1}$.
Then $\sim'$ is also an equivalence relation on $M_n(k)$.
Let $M_n(k)/\sim'$ be the corresponding set of  equivalence classes.

\begin{proposition}\prlabel{2.16}
Each generalized Hopf-Ore extension of $U(\mathfrak{g})$ is isomorphic to one of the followings:

{\rm(a)} $U(\mathfrak{g})(\ep,0,0,\d_{0,A})$, where $A\in M_n(k)/\sim$;\\
{\rm(b)} $U(\mathfrak{g})(\chi_{\bf 1},0,0,0)$;\\
{\rm(c)} $U(\mathfrak{g})(\ep,a_1,a_2,\d_{0,B})$, where $B\in M_n(k)/\sim'$.

Moreover, as generalized Hopf-Ore extensions of $U(\mathfrak{g})$,
$U(\mathfrak{g})(\ep,0,0,\d_{0,A})$, $U(\mathfrak{g})(\chi_{\bf 1},0,0,0)$
and $U(\mathfrak{g})(\ep,a_1,a_2,\d_{0,B})$ are pairwise non-isomorphic.
\end{proposition}
\begin{proof}
The first claim follows from the discussion above and \leref{iso}, the second one follows from \prref{1.11}.
\end{proof}

When $n=2$, $\mathfrak g$ is exactly the Lie algebra $\mathfrak{g}_1$ given in Subsection 2.3.
Thus, let $n=2$ in \prref{2.16}, one gets the classifications of the generalized Hopf-Ore extensions of
$U(\mathfrak{g}_1)$ with char$(k)=0$. In this case, two matrices $A, B\in M_2(k)$ satisfy
$A\sim' B$ if and only if there is a matrix $P\in GL_2(k)$ such that $B={\rm det}(P)^{-1}PAP^{-1}$.

\begin{remark}
	Assume that $k$ is an algebraically closed field of characteristic zero.
	In \cite{Zh}, author constructed two kinds of connected Hopf algebras $A(\l_1,\l_2,\a)$ and $B(\l)$ and
	proved that every connected Hopf algebra of GK-dimension three is isomorphic to one of the following: the enveloping
	algebra $U(\mathfrak{g})$ for a $3$-dimensional Lie algebra $\mathfrak{g}$,
	the Hopf algebras $A(0,0,0)$, $A(0,0,1)$, $A(1,1,1)$, $A(1,\lambda,0)$  and $B(\l)$, $\lambda\in k$.
	Clearly, one can check that these Hopf algebras are the generalized Hopf-Ore extensions of the enveloping
	algebras of some $2$-dimensional Lie algebras.
\end{remark}

\section*{ACKNOWLEDGMENTS}

This work is supported by the National Natural Science Foundation of China (Grant No. 11571298).


\begin{thebibliography}{99}
\bibitem{BDG1}
M. Beattie, S. D\v{a}sc\v{a}lescu, L. Gr\"{u}nenfelder, On the number of types of finite dimensional Hopf algebras, Invent. Math. 136(1) (1999)1-7.
\bibitem{BDG2}
M. Beattie, S. D\v{a}sc\v{a}lescu, L. Gr\"{u}nenfelder, Constructing pointed Hopf algebras by Ore extensions, J. of Algebra 225(2)(2000) 743-770.
\bibitem{BOZZ}
K.A. Brown, S. O'Hagan, J.J. Zhang and G. Zhuang, Connected Hopf algebras and
iterated Ore extensions, J. of Pure and Applied Algebra 219(6) (2015) 2405-2433.
\bibitem{Ci}
C. Cibils, Half-quantum groups at roots of unity, path algebras, and representation type,  Int. Math. Res. Notices (1997) 1997 (12): 541-553.
\bibitem{Mc}
J.C. McConnell and J.C.Robson, Noncommutative Noetherian rings, Wiley-Interscience, New York. 1987.
\bibitem{Mo}
S. Montgomery, Hopf Algebras and Their Actions on Rings, CBMS Reg. Conf. Ser. Math.
82, Amer. Math. Soc., Providence, RI, 1993.
\bibitem{Pa}
A. N. Panov, Ore extensions of Hopf algebras, Mathematical Notes 74(3) (2003) 401-410.
\bibitem{WYC}
Z. Wang, L. You, H. X. Chen, Representations of Hopf-Ore extensions of group algebras
and pointed Hopf algebras of rank one, Algebras and Representation Theory 18(3) (2015) 801-830.
\bibitem{Zh}
G. Zhuang, Properties of connected Hopf algebras of finite Gelfand-Kirillov dimension, J.
London Math. Soc. 87 (2)(2013) 877 - 898.

\end{thebibliography}
\end{document}